\title{Delooping the sign homomorphism in univalent mathematics}
\author{%
  \IEEEauthorblockN{\'El\'eonore Mangel}
  \IEEEauthorblockA{\textit{\'Ecole Normale Sup\'erieure Paris-Saclay}\\ eleonore.mangel@ens-paris-saclay.fr}
  \and
  \IEEEauthorblockN{Egbert Rijke}
  \IEEEauthorblockA{\textit{University of Ljubljana}\\ egbert.rijke@fmf.uni-lj.si}
}
\newtheorem{theorem}{Theorem}
\newtheorem{proposition}[theorem]{Proposition}
\newtheorem{lemma}[theorem]{Lemma}
\newtheorem{corollary}[theorem]{Corollary}
\theoremstyle{definition}
\newtheorem{definition}[theorem]{Definition}
\newtheorem{remark}[theorem]{Remark}
\mathchardef\dsh="2D
\newcommand{\blank}{\mathord{\hspace{1pt}\text{--}\hspace{1pt}}}
\newcommand{\agdaunimath}{agda-unimath\xspace}
\newcommand{\binomial}[2]{{#1 \choose #2}}
\newcommand{\brck}[1]{\|#1\|}
\newcommand{\Brck}[1]{\left\|#1\right\|}
\newcommand{\BS}{BS}
\newcommand{\BA}{BA}
\newcommand{\Card}[1]{\left|#1\right|}
\newcommand{\define}[1]{{\bf #1}}
\newcommand{\demb}{\mathbin{\hookrightarrow_{\mathsf{d}}}}
\newcommand{\dProp}{\mathsf{dProp}}
\newcommand{\evzero}{\mathsf{ev}_0}
\newcommand{\eqequiv}{\mathsf{eq\dsh{}equiv}}
\newcommand{\equiveq}{\mathsf{equiv\dsh{}eq}}
\newcommand{\fib}{\mathsf{fib}}
\newcommand{\Fin}[1]{[#1]}
\newcommand{\isfinite}{\mathsf{is\dsh{}finite}}
\newcommand{\N}{\mathbb{N}}
\newcommand{\Z}{\mathbb{Z}}
\newcommand{\F}{\mathbb{F}}
\newcommand{\refl}{\mathsf{refl}}
\newcommand{\sign}{\mathsf{sign}}
\newcommand{\suc}{\mathsf{succ}}
\newcommand{\swap}{\mathsf{swap}}
\newcommand{\unit}{\mathbf{1}}
\newcommand{\universecomponent}[1]{\UU_{(#1)}}
\newcommand{\UU}{\mathcal{U}}
\newcommand{\VV}{\mathcal{V}}
\newcommand{\id}{\mathsf{id}}
\newcommand{\transposition}[2]{(#1\,#2)}
\newcommand{\tr}{\mathsf{tr}}
\newcommand{\defeq}{:=}
\newcommand{\ap}{\mathsf{ap}}
\newcommand{\tot}{\mathsf{tot}}
\newcommand{\pr}{\mathsf{pr}}
\newcommand{\loopspace}{\Omega}
\newcommand{\Cycle}{\mathsf{Cycle}}
\newcommand{\cycledecomposition}{\mathsf{Cycle\dsh{}decomp}}
\newcommand{\EqReld}{\mathsf{Eq\dsh{}Rel_d}}
\newcommand{\Sigmadecomp}{\mathsf{\Sigma\dsh{}decomp}}
\newcommand{\cyc}{\mathsf{cycle}}
\newcommand{\lst}{\mathsf{list}}
\newcommand{\support}{\mathsf{support}}
\newcommand{\brckRezk}[1]{\brck{#1}_{\mathsf{Rezk}}}
\begin{document}

\maketitle

\begin{abstract}
  In univalent mathematics there are at least two equivalent ways to present the category of groups. Groups presented in their usual algebraic form are called abstract groups, and groups presented as pointed connected $1$-types are called concrete groups. Since these two descriptions of the category of groups are equivalent, we find that every algebraic group corresponds uniquely to a concrete group---its delooping---and that each abstract group homomorphisms corresponds uniquely to a pointed map between concrete groups.

  The $n$-th abstract symmetric group $S_n$ of all bijections $\Fin{n}\simeq\Fin{n}$, for instance, corresponds to the concrete group of all $n$-element types. The sign homomorphism from $S_n$ to $S_2$ should therefore correspond to a pointed map from the type $\BS_n$ of all $n$-element types to the type $\BS_2$ of all $2$-element types. Making use of the univalence axiom, we characterize precisely when a pointed map $\BS_n\to_\ast \BS_2$ is a delooping of the sign homomorphism. Then we proceed to give several constructions of the delooping of the sign homomorphism. Notably, the construction following a method of Cartier can be given without reference to the sign homomorphism. Our results are formalized in the \agdaunimath library.
\end{abstract}

\section{Introduction}

In this article we present several ways to construct a delooping of the sign homomorphism \cite{Bourbaki98}
\begin{equation*}
  \sign:S_n\to S_2
\end{equation*}
in univalent mathematics \cite{hottbook}. In homotopy type theory and univalent mathematics, a delooping of a group $G$ \cite{BvDR18} is a pointed connected $1$-type $BG$ equipped with a group isomorphism
\begin{equation*}
  G\cong\loopspace BG.
\end{equation*}
There are many ways of defining the delooping of a group. One way is via the Rezk completion of \cite{AKS2015}, and another way is via the type of $G$-torsors \cite{Symmetry}. Both constructions can be extended to also deloop group homomorphisms: The delooping of a group homomorphism $f:G\to H$ is a pointed map $Bf:BG\to_\ast BH$ equipped with a homotopy witnessing that the square
\begin{equation*}
  \begin{tikzcd}
    G \arrow[r,"f"] \arrow[d,swap,"\cong"] & H \arrow[d,"\cong"] \\
    \loopspace BG \arrow[r,swap,"\loopspace Bf"] & \loopspace BH
  \end{tikzcd}
\end{equation*}
commutes. Every group and every group homomorphism has a unique delooping. Indeed, the category of groups is equivalent to the category of pointed connected $1$-types \cite{Symmetry}.

For the symmetric group $S_n$, however, there is a very natural description of its delooping: By the univalence axiom it is simply the type $BS_n$ of all $n$-element types \cite{Yorgey14,FGGvdW18,hott-intro}. This suggests that there could be a similarly natural description of the delooping of the sign homomorphism. Recall that the sign homomorphism is the unique group homomorphism $S_n\to S_2$ that maps the transpositions $\transposition{i}{j}$ to the non-identity element of $S_2$ . It can be defined by
\begin{equation*}
  \sign(e) =
  \begin{cases}
    1 & \text{if the number of inversions of $e$ is even}\\
    -1 & \text{if the number of inversions of $e$ is odd,}
  \end{cases}
\end{equation*}
where an inversion of a permutation $e:\Fin{n}\simeq \Fin{n}$ is a pair $(i,j)$ of elements in $\Fin{n}$ such that $i<j$ and $e(i)>e(j)$. In other words, inversions of $e$ are strictly ordered pairs of which $e$ reverses the ordering.

Our goal in this article is therefore to construct a pointed map
\begin{equation*}
  Q:\BS_n\to_\ast \BS_2
\end{equation*}
that transforms an arbitrary $n$-element type into a $2$-element type in such a way that $Q$ deloops the sign homomorphism. The sign homomorphism is of course the only surjective group homomorphism $S_n\to S_2$. Another way of phrasing our goal is therefore that we will construct a pointed \emph{connected} map
\begin{equation*}
  \BS_n\to_\ast \BS_2.
\end{equation*}
We will show in \cref{theorem:recognition-delooping-sign-homomorphism} that the condition that $Q:\BS_n\to_\ast \BS_2$ is connected is indeed equivalent to the condition that $Q$ deloops the sign homomorphism.

However, since the delooping of the sign homomorphism can be readily constructed via the Rezk completion or via torsors, it is fair to ask why we would bother delooping the sign homomorphism in yet another way. One of our motivations is to better understand how to work with finite types in univalent mathematics, or perhaps more generally in any form of equivalence invariant mathematics. Perhaps our main motivation is simply that we want to see what classical topics of mathematics look like from a univalent point of view, and how to use this univalent point of view for programming with mathematical concepts in a computer proof assistant such as Agda.

In particular, since our goal is to construct a pointed map $\BS_n\to_\ast \BS_2$ that deloops the sign homomorphism, our constructions take arbitrary types $X$ of cardinality $n$ as input, i.e., not just the standard $n$-element type $\Fin{n}$. The type $\Fin{n}$ possesses a lot of extra structure that is not present on arbitrary $n$-element types, such as a total ordering. Indeed, when $n\geq 2$ it is not possible to choose a particular bijection from an $n$-element type $X$ to the standard $n$-element type $\Fin{n}$ since any such choice of bijection cannot itself be invariant under equivalences. For the same reason, it is not possible to choose an element of an $n$-element type when $n\geq 2$, even though finite types of cardinality $n\geq 2$ are clearly inhabited. While this may seem to limit the programs that we can write for arbitrary $n$-element types, we note that it is also precisely the equivalence invariant nature of our constructions that enables us to deloop the sign homomorphism as an operation from the type of $n$-element types to the type of $2$-element types.

\subsection*{Formalization in the agda-unimath library}

Our work has been formalized in the \agdaunimath library \cite{Agda-Unimath}. This is a new library of formalized mathematics from a univalent point of view, which is comparable in size to the agda standard library and the cubical agda library. The agda-unimath library contains a significant portion of finite mathematics, which made it useful for our project. We added the following files to the agda-unimath library for our project:

\begin{center}
  \begin{tabular}{|l|}
    \hline
    \href{https://unimath.github.io/agda-unimath/finite-group-theory.cartier-delooping-sign-homomorphism.html}{finite-group-theory.cartier-delooping-sign-homomorphism} \\
    \href{https://unimath.github.io/agda-unimath/finite-group-theory.delooping-sign-homomorphism.html}{finite-group-theory.delooping-sign-homomorphism} \\
    \href{https://unimath.github.io/agda-unimath/finite-group-theory.orbits-permutations.html}{finite-group-theory.orbits-permutations} \\
    \href{https://unimath.github.io/agda-unimath/finite-group-theory.permutations.html}{finite-group-theory.permutations} \\
    \href{https://unimath.github.io/agda-unimath/finite-group-theory.sign-homomorphism.html}{finite-group-theory.sign-homomorphism} \\
    \href{https://unimath.github.io/agda-unimath/finite-group-theory.simpson-delooping-sign-homomorphism.html}{finite-group-theory.simpson-delooping-sign-homomorphism} \\
    \href{https://unimath.github.io/agda-unimath/finite-group-theory.transpositions.html}{finite-group-theory.transpositions} \\
    \hline
  \end{tabular}
\end{center}

During the preparation of this article we have found some proofs that are quicker or conceptually clearer than the proofs we have formalised. While our main results are formalised, some of the proofs we present here do not correspond directly to the proofs that have been formalised.

A proof of the fact that the sign homomorphism is indeed a group homomorphism can be found in many standard texts on undergraduate algebra, such as \cite{rotman2012introduction,Bourbaki98}. We have formalized the proof of \cite{rotman2012introduction}, but we won't go into those details in this article.

\subsection*{Overview of the article}

In \cref{sec:preliminary} we establish some notation and recall some basic facts of univalent mathematics that we will use later in the article. In \cref{sec:rezk} we recall that a general way of delooping group homomorphisms exists via Rezk completions. In \cref{sec:finite-types} we recall the definition and basic properties of finite types, decidable subtypes of finite types, and equivalent ways of describing decidable equivalence relations on finite types.

Our original contributions start in \cref{sec:transpositions} and onwards. In \cref{theorem:cycle-decompositions} we show that the type of permutations on a finite set $X$ is equivalently described as the type of cycle decompositions of $X$. This fact is proven using the univalence axiom, and we use it to give a univalent perspective on the fact that the transpositions generate the symmetric groups.

In \cref{theorem:recognition-delooping-sign-homomorphism} of \cref{sec:detection-sign-homomorphism} we give five conditions on a pointed map $Q:\BS_n\to_\ast \BS_2$ that are equivalent to $Q$ being a delooping of the sign homomorphism. We then proceed to give four ways of delooping the sign homomorphism. In \cref{sec:delooping-sign-fixed-points} we use the fixed points of a certain family of $S_n$-actions to deloop the sign homomorphism. In \cref{sec:delooping-sign-orbits} we use the orbits of a similar family of $S_n$-actions to define a second delooping. \cref{cor:quotient-construction} of \cref{theorem:recognition-delooping-sign-homomorphism} is used in \cref{theorem:simpson} of \cref{sec:simpson} to construct a delooping of the sign homomorphism following a suggestion of Simpson. Finally, in \cref{theorem:cartier-delooping} of \cref{sec:cartier} we use \cref{cor:quotient-construction} again to construct a delooping of the sign homomorphism following a method of Cartier. This last method is especially noteworthy, since it is used to define a map $\BS_n\to_\ast \BS_2$ without referring to the sign homomorphism. We conclude the article in \cref{sec:alternating-groups}, where we construct the delooping of the alternating groups $A_n$.

\subsection*{Acknowledgements}

The present work contains the results of an internship project of the first-listed author, advised by the second-listed author, as part of the masters program at the \'Ecole Normale Sup\'erieure Paris-Saclay. The project topic was selected in the fall of 2021, and the project ran from February to July of 2022. During our project, a question on the well-definedness of the sign homomorphism became popular on Mathoverflow.net \cite{mathoverflow}. While that question did not directly concern the delooping of the sign homomorphism, we have benefited from an answer by Bjorn Poonen to this question. We would also like to thank Andrej Bauer, Alex Simpson, Ulrik Buchholtz, and Tom de Jong for helpful discussions during the project.

\subsection*{Grant Acknowledgements}

The second-listed author gratefully acknowledges support by the Air Force Office of Scientific Research through grant FA9550-21-1-0024, and support by the Slovenian Research Agency research program P1-0294.

\section{Preliminary definitions}\label{sec:preliminary}

In this article we will use the notation from \cite{hott-intro}. In particular, we will make extensive use of a universe $\UU$ of types, which is closed under dependent function types ($\Pi$), dependent pair types ($\Sigma$), identity types ($=$), coproduct types ($+$) and product types ($\times$), and propositional truncations. The elements of identity types are called \textbf{identifications}. Recall that the \textbf{univalence axiom} asserts that the canonical map
\begin{equation*}
  \equiveq:(A=B)\to (A\simeq B)
\end{equation*}
defined by $\equiveq(\refl)\defeq \id$ is an equivalence for any two types $A,B:\UU$, where the type $A\simeq B$ of equivalences from $A$ to $B$ is defined to be the type of bi-invertible maps from $A$ to $B$. For an extensive reference on equivalences and the univalence axiom, see \cite{hottbook}.

A type $P$ is said to be a \textbf{proposition} if it contains at most one element. A \textbf{decidable proposition} is a proposition $P$ such that $P+\neg P$ holds. We will write $\dProp$ for the type of decidable propositions. \textbf{Subtypes} of a type $X$ are simply families of propositions over $X$, and \textbf{decidable subtypes} of $X$ are families of decidable propositions over $X$.

\subsection*{The action on equivalences of type-indexed families of types}

One consequence of the univalence axiom which is of particular interest to us in the present work, is that subtypes of the universe are automatically closed under equivalences. Given a subtype $P$ of the universe $\UU$ and two types $X,Y:\UU$, we have a map
\begin{equation*}
  (X\simeq Y)\to (P(X)\Leftrightarrow P(Y)).
\end{equation*}
Furthermore, if we write $\UU_P$ for the type $\sum_{(X:\UU)}P(X)$, it follows from univalence that
\begin{equation*}
  (X=_{\UU_P} Y)\simeq (X\simeq Y).
\end{equation*}
These observations imply that any family $A:\UU_P\to\VV$ of types indexed by types in $P$ has an \textbf{action on equivalences}. That is, for any two types $X$ and $Y$ in $\UU_P$, there is a unique map $e\mapsto A_e$ such that the triangle
\begin{equation*}
  \begin{tikzcd}
    (X=Y) \arrow[dr,"\tr_A"] \arrow[d,swap,"\equiveq"] \\
    (X\simeq Y) \arrow[r,dashed,swap,"e\mapsto A_e"] & (A_X\simeq A_Y)
  \end{tikzcd}
\end{equation*}
commutes. This follows at once from the fact that $\equiveq$ is an equivalence by the univalence axiom. In other words, the evaluation map
\begin{equation*}
  \left(\prod_{(Y:\UU)}\prod_{(q:P(Y))}(X\simeq Y) \to (A_X\simeq A_Y)\right)\to (A_X\simeq A_X)
\end{equation*}
at the identity function is an equivalence for every type $X:\UU_P$. Specifically, the action $e\mapsto A_e$ of $A$ on equivalences is the unique family of maps such that $A_{\id}=\id$. The action of $A$ on equivalences often allows us to bypass or get a better handle on transport in $A$ along identifications that correspond via univalence to equivalences.

We also note that for any family of maps $f_X:A_X\to B_X$ indexed by $X:\UU_P$ and every equivalence $e:X\simeq Y$ in the subuniverse $P$ we get a commuting square
\begin{equation*}
  \begin{tikzcd}
    A_X \arrow[r,"A_e"] \arrow[d,swap,"f_X"] & A_Y \arrow[d,"f_Y"] \\
    B_X \arrow[r,swap,"B_e"] & B_Y.
  \end{tikzcd}
\end{equation*}
This follows by equivalence induction, a consequence of univalence, by the fact that $A_\id=\id$ and $B_\id=\id$.

\subsection*{Propositional truncations}

We briefly recall the propositional truncation operation, which is essential in our work. \textbf{Propositional truncation} is an operation $A\mapsto \brck{A}$ that takes an arbitrary type $A$ and returns a proposition $\brck{A}$. The proposition $\brck{A}$ comes equipped with a map $\eta:A\to\brck{A}$, which is called the \textbf{unit} of the propositional truncation, and it satisfies the universal property that any map $A\to P$ from $A$ into a proposition $P$ factors uniquely through $\eta$, as indicated in the diagram
\begin{equation*}
  \begin{tikzcd}
    A \arrow[dr] \arrow[d,swap,"\eta"] \\
    \brck{A} \arrow[r,dashed] & P.
  \end{tikzcd}
\end{equation*}
A good way of thinking about the proposition $\brck{A}$ is that it asserts that the type $A$ is \emph{inhabited}. In other words, propositional truncations give us a way to prove or assert that a type is inhabited without exhibiting a specified element of that type. Indeed, sometimes it is possible to show that a type is inhabited while it would be impossible to construct an element.

\section{Delooping group homomorphisms via the Rezk completion}\label{sec:rezk}

A basic way of delooping a group $G$ is to take the Rezk completion of the pregroupoid with one object $\ast$, and morphisms $\hom(\ast,\ast)\defeq G$. Recall from \cite{AKS2015} that a \textbf{precategory} $C$ consists of a type of objects, and for any two objects $X$ and $Y$ a set of morphisms $\hom(X,Y)$ equipped with composition and identities satisfying the laws of a category. A \textbf{category} is a precategory satisfying the `external univalence' condition that for any two objects $X$ and $Y$, the canonical map
\begin{equation*}
  (X=Y)\to (X\cong Y)
\end{equation*}
from identifications of $X$ to $Y$ to isomorphisms from $X$ to $Y$, is an equivalence. The \textbf{Rezk completion} of a precategory $C$ is a category $\brckRezk{C}$ equipped with a functor $\eta:C\to\brckRezk{C}$ satisfying the universal property that for any category $D$, we have an equivalence
\begin{equation*}
  \blank\circ\eta:(\brckRezk{C}\to D)\simeq (C\to D)
\end{equation*}
of categories. It is immediate that the Rezk completion is functorial, in the sense that for any functor $F:C\to D$ between precategories, there is a unique functor $\brckRezk{F}:\brckRezk{C}\to\brckRezk{D}$ such that the square of functors
\begin{equation*}
  \begin{tikzcd}
    C \arrow[r,"F"] \arrow[d,swap,"\eta"] & D \arrow[d,"\eta"] \\
    \brckRezk{C} \arrow[r,swap,"\brckRezk{F}"] & \brckRezk{D}
  \end{tikzcd}
\end{equation*}
commutes.

A \textbf{pregroupoid} is simply a precategory in which every morphism is an isomorphism and a \textbf{groupoid} is a Rezk complete pregroupoid. Note that groupoids are just $1$-types, and functors between groupoids are just maps between $1$-types. In particular, if $f:G\to H$ is a group homomorphism, then we may consider $f$ to be a functor from the one-object pregroupoid $G$ to the one-object pregroupoid $H$ and Rezk complete $f$. The Rezk completion of a group $G$ is a pointed connected $1$-type $BG$, and the Rezk completion of a group homomorphism $f:G\to H$ is a pointed map $Bf:BG\to_\ast BH$. The fact that $Bf$ is a delooping of the group homomorphism $f$ follows at once from the commuting square of functors
\begin{equation*}
  \begin{tikzcd}
    G \arrow[r,"f"] \arrow[d,swap,"\eta"] & H \arrow[d,"\eta"] \\
    BG \arrow[r,swap,"Bf"] & BH.
  \end{tikzcd}
\end{equation*}
The functor $G\mapsto BG$ via the Rezk completion is an equivalence from the category of groups to the category of pointed connected $1$-types. In particular, the delooping of the sign homomorphism exists. However, the delooping of the sign homomorphism via the Rezk completion is not described in terms of a `natural' operation that turns an arbitrary $n$-element type into a $2$-element type in a way that matches the sign homomorphism.

\section{Finite types in univalent mathematics}\label{sec:finite-types}

In this section we recall the definition of finite types, as they are commonly considered in univalent mathematics \cite{Yorgey14,FGGvdW18,hott-intro}. We will omit most proofs of our claims, but we note that they have been formalised.

\subsection*{Finite types}
The \textbf{standard finite types} are a type family $\Fin{\blank} : \N \to \UU$, which is defined recursively by $\Fin{0}:=\emptyset$ and $\Fin{k+1}:=\Fin{k}+\unit$. 
A type $X$ is said to be \textbf{finite} if it is merely equivalent to a standard finite type, i.e., if it comes equipped with an element of type
\begin{equation*}
  {\textstyle \isfinite(X):=\Brck{\sum_{(k:\N)}\Fin{k}\simeq X}}
\end{equation*}
The type $\isfinite(X)$ is equivalent to $\sum_{(k:\N)}\brck{\Fin{k}\simeq X}$. In particular, every finite type has a unique and well-defined cardinality of type $\N$.

A type $X$ is said to be \textbf{discrete} if its identity types are decidable, i.e., if there is an element
\begin{equation*}
  d(x,y):(x=y)+(x\neq y)
\end{equation*}
for each $x,y:X$. Hedberg's theorem \cite{hedberg1998coherence} asserts that discrete types are always sets in the sense that its identity types are propositions. This implies that being discrete is a property, which in turn implies that all finite types are discrete.

\begin{definition}
  The \textbf{type of $n$-element types} is defined as
  \begin{equation*}
    \BS_n := \sum_{X:\UU}\|\Fin{n}\simeq X\|.
  \end{equation*}
\end{definition}

A good way to think about $\BS_n$ is as the \emph{groupoid of all $n$-element sets}. The following proposition is an immediate consequence of the univalence axiom. Note that it implies that $\BS_n$ is not contractible for $n\geq 2$.

\begin{proposition}
  The type $\BS_n$ is a pointed connected type with loop space $S_n:= (\Fin{n}\simeq \Fin{n})$. 
\end{proposition}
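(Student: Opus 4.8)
The plan is to read everything off from the univalence axiom together with the single observation that $\brck{\Fin{n}\simeq X}$ is a proposition for every $X:\UU$. First I would take the basepoint of $\BS_n$ to be the pair $(\Fin{n},\eta(\id))$, where $\eta$ is the unit of the propositional truncation and $\id:\Fin{n}\simeq\Fin{n}$ is the identity equivalence; this makes $\BS_n$ a pointed type. Since $\brck{\Fin{n}\simeq X}$ is a proposition for each $X$, the first projection $\pr_1:\BS_n\to\UU$ is an embedding, so for any two points $(X,p)$ and $(Y,q)$ of $\BS_n$ the map
\begin{equation*}
  \ap_{\pr_1}:((X,p)=(Y,q))\to(X=Y)
\end{equation*}
is an equivalence. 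Composing it with the univalence equivalence $\equiveq:(X=Y)\simeq(X\simeq Y)$ yields an equivalence $((X,p)=(Y,q))\simeq(X\simeq Y)$, natural in both arguments.

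Instantiating this equivalence at the basepoint in both slots computes the loop space: $\loopspace\BS_n\simeq(\Fin{n}\simeq\Fin{n})\defeq S_n$. To see that this is the asserted identification of $S_n$ with $\loopspace\BS_n$ as groups, note that $\equiveq$ sends $\refl$ to $\id$, so by path induction the displayed equivalence carries concatenation of loops to composition of equivalences and the constant loop to $\id$; hence it is a group isomorphism.

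Finally I would show that $\BS_n$ is connected. It is inhabited by its basepoint, so it remains to prove $\brck{(\Fin{n},\eta(\id))=(X,p)}$ for every $(X,p):\BS_n$, and by the embedding property above this proposition is equivalent to $\brck{\Fin{n}=X}$. Since propositional truncation is functorial, applying the inverse of $\equiveq$ to the given element $p:\brck{\Fin{n}\simeq X}$ produces an element of $\brck{\Fin{n}=X}$, as required. The proof is short; the only step that needs any care is the appeal to the subtype identity principle, namely that equality in $\BS_n$ reduces to equality of the underlying types, which is precisely where the hypothesis that the truncation is a proposition is used.
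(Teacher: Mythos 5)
Your proof is correct, and it spells out precisely the argument the paper alludes to when it says the proposition is an immediate consequence of univalence (the paper itself omits the proof, noting that such proofs have been formalised). Your three ingredients—the basepoint $(\Fin{n},\eta(\id))$, the subtype identity principle combined with univalence to characterize identifications in $\BS_n$ as equivalences of underlying types, and functoriality of propositional truncation applied to the witness $p:\brck{\Fin{n}\simeq X}$ for connectedness—are exactly the standard route and are all applied correctly.
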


On the other hand, the type $\sum_{(X:\UU)}\Fin{n}\simeq X$ is contractible by the univalence axiom. This shows the importance of expressing that $X$ is an $n$-element type using the propositional truncation, and not by imposing an equivalence $\Fin{n}\simeq X$ as structure on $X$.

The following proposition shows that a pointed $(n+1)$-element type is equivalently described as an $n$-element type. The underlying map of this equivalence maps an $(n+1)$-element type $X$ equipped with $x:X$ to the type
\begin{equation*}
  Y\defeq\sum\nolimits_{(y:X)}x\neq y,
\end{equation*}
and the underlying map of the inverse is given by $Y\mapsto Y+\unit$.

\begin{proposition}
  We have an equivalence
  \begin{equation*}
    {\textstyle\left(\sum_{(X:\BS_{n+1})}X\right)\simeq \BS_n}
  \end{equation*}
  from the type of pointed $(n+1)$-element types to the type of $n$-element types.
\end{proposition}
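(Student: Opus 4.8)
The plan is to verify that the two maps indicated above are mutually inverse. Write $\phi$ for the forward map, sending a pointed $(n+1)$-element type $(X,x)$ to $\sum_{(y:X)}x\neq y$, and $\psi$ for the backward map, sending an $n$-element type $Y$ to the pointed type $(Y+\unit,\mathsf{inr}(\star))$. Since a quasi-inverse always yields an equivalence, it is enough to produce $\phi$, $\psi$, and homotopies $\psi\circ\phi\htpy\id$ and $\phi\circ\psi\htpy\id$.

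First I would spell out what identifications in the domain and codomain are. Reassociating the $\Sigma$-types, an element of $\sum_{(X:\BS_{n+1})}X$ is a triple $(X,x,p)$ with $X:\UU$, $x:X$, and $p:\|\Fin{n+1}\simeq X\|$; as the last component lies in a proposition, the univalence axiom identifies the type $(X,x,p)=(X',x',p')$ with the type $\sum_{(f:X\simeq X')}f(x)=x'$ of \emph{pointed} equivalences. Identifications in $\BS_n$ are simply equivalences of underlying types. Hence, to build the two homotopies it suffices to construct, for all $X:\BS_{n+1}$, $x:X$ and $Y:\BS_n$, a pointed equivalence
\begin{equation*}
  \Bigl(\,{\textstyle\sum_{(y:X)}x\neq y}\Bigr)+\unit\;\simeq\;X,\qquad \mathsf{inr}(\star)\mapsto x,
\end{equation*}
and an equivalence $\sum_{(z:Y+\unit)}\mathsf{inr}(\star)\neq z\;\simeq\;Y$.

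Both are obtained by computing fibers. For the first, the evident map $\mathsf{inl}(y,q)\mapsto y$, $\mathsf{inr}(\star)\mapsto x$ preserves the basepoint, and its fiber over $z:X$ is equivalent to $(x\neq z)+(x=z)$, which is contractible since finite types are discrete, so that $x=z$ is a decidable proposition. For the second, the map $(\mathsf{inl}(y),q)\mapsto y$ has fiber over $y:Y$ equivalent to $\mathsf{inr}(\star)\neq\mathsf{inl}(y)$, which is contractible because $\mathsf{inr}(\star)=\mathsf{inl}(y)$ is empty; the summand $z=\mathsf{inr}(\star)$ contributes nothing, as $\mathsf{inr}(\star)\neq\mathsf{inr}(\star)$ is empty. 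It then remains only to see that $\phi$ actually takes values in $\BS_n$: from $\|\Fin{n+1}\simeq X\|$ together with the first equivalence above we obtain $\|\Fin{n}+\unit\simeq(\sum_{(y:X)}x\neq y)+\unit\|$, and since a type $A$ is an $n$-element type precisely when $A+\unit$ is an $(n+1)$-element type (a standard fact about cardinalities of finite types), $\sum_{(y:X)}x\neq y$ is an $n$-element type. Well-definedness of $\psi$ is immediate from $\Fin{n+1}=\Fin{n}+\unit$ together with the fact that truncation commutes with $\blank+\unit$.

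I do not anticipate a genuine difficulty; the only steps needing a little care are the bookkeeping of the basepoint $x$ through the round-trip $\psi\circ\phi$ and the cardinality fact just invoked. As an alternative, one could observe that both $\sum_{(X:\BS_{n+1})}X$ and $\BS_n$ are pointed connected $1$-types, that $\phi$ is a pointed map, and that it induces on loop spaces the identification of the stabilizer of a point in $S_{n+1}$ with $S_n$; a pointed map of pointed connected $1$-types that is an equivalence on loop spaces is an equivalence.
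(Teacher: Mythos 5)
Your proof is correct and fleshes out exactly the construction the paper sketches: it names the two mutually inverse maps $(X,x)\mapsto\sum_{(y:X)}x\neq y$ and $Y\mapsto(Y+\unit,\mathsf{inr}(\star))$, and then verifies by fiberwise reasoning that both round-trips are homotopic to the identity, with the contractibility of $(x\neq z)+(x=z)$ coming from discreteness of finite types (decidability plus disjointness making the coproduct a contractible proposition). The paper states the maps but leaves the verification implicit, so your proof is the natural filling-in of the same approach; the alternative argument you mention (checking that $\phi$ is a pointed equivalence on loop spaces between pointed connected $1$-types, i.e.\ the stabilizer of a point in $S_{n+1}$ is $S_n$) would work too and is a nice second perspective, but it is not what the paper has in mind.
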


In the special case where $n=2$ we obtain the following corollary, which was already observed in Theorem II.2 of \cite{BR2017realprojective}.

\begin{corollary}
  The type of pointed $2$-element types  is contractible. Consequently, the evaluation map
  \begin{equation*}
    \evzero:(\Fin{2}\simeq X)\to X
  \end{equation*}
  given by $\evzero(e):= e(0)$ is an equivalence.
\end{corollary}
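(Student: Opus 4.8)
The plan is to deduce both assertions from the preceding proposition, instantiated at $n=1$, together with the univalence axiom.

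First I would record that the type $\BS_1$ of $1$-element types is contractible. A $1$-element type is merely equivalent to $\unit$ and is therefore contractible, since being contractible is a proposition and hence survives the propositional truncation; conversely every contractible type is merely equivalent to $\Fin{1}=\unit$. Thus $\BS_1$ is equivalent to the type of contractible types, which is itself contractible with centre $\unit$ by univalence. Applying the preceding proposition with $n=1$ now yields an equivalence $\bigl(\sum_{(X:\BS_2)}X\bigr)\simeq\BS_1$, so the type of pointed $2$-element types is contractible.

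For the evaluation map I would view $\evzero$ as a family of maps $(\Fin{2}\simeq X)\to X$ indexed by $X:\BS_2$ and use the standard fact that such a family is a fibrewise equivalence exactly when the induced map on total spaces is an equivalence. The total space of $X\mapsto X$ over $\BS_2$ is the type of pointed $2$-element types, which we have just shown is contractible. The total space of $X\mapsto(\Fin{2}\simeq X)$ over $\BS_2$ is equivalent to $\sum_{(X:\UU)}(\Fin{2}\simeq X)$, because an equivalence $\Fin{2}\simeq X$ already provides an element of $\brck{\Fin{2}\simeq X}$; and $\sum_{(X:\UU)}(\Fin{2}\simeq X)$ is contractible by univalence, with centre $(\Fin{2},\id)$. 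Since any map between contractible types is an equivalence, the map on total spaces induced by $\evzero$ is an equivalence, and hence $\evzero:(\Fin{2}\simeq X)\to X$ is an equivalence for every $2$-element type $X$.

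All of this is routine once the preceding proposition is available; the only point that might look as though it needs attention is that we never explicitly identify the map on total spaces induced by $\evzero$ with a composite of the equivalences above, but this is unnecessary precisely because both total spaces are contractible, so any map between them is automatically an equivalence. I therefore do not expect a genuine obstacle here — essentially all the work has already been done in the preceding proposition and in univalence.
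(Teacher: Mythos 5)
Your proof is correct and follows the same route the paper intends: the paper itself gives no explicit proof, merely pointing to the preceding proposition, and your argument supplies exactly the missing details. Specialising the proposition to $n=1$ gives an equivalence from pointed $2$-element types to $\BS_1$, and your observation that $\BS_1$ is contractible (via univalence and the fact that being contractible is a proposition) closes the first claim. For the second claim, your use of the fibrewise-equivalence criterion is the standard way to make the word ``consequently'' precise: the total space of $X\mapsto(\Fin{2}\simeq X)$ over $\BS_2$ reduces to $\sum_{(X:\UU)}(\Fin{2}\simeq X)$, which is contractible by univalence, the total space of $X\mapsto X$ over $\BS_2$ is contractible by the first claim, and a map between contractible types is automatically an equivalence. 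No gaps.
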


\subsection*{Decidable subtypes of finite types}
Next, we review the theory of decidable subtypes of finite types. Recall that a \define{decidable embedding} $B \demb A$ is an embedding $f : B\hookrightarrow A$ such that the fibers of $f$ are decidable, i.e., $f$ comes equipped with a dependent function
\begin{equation*}
  \prod_{(x:X)}\fib_f(x)+\neg\fib_f(x)
\end{equation*}
By theorem 17.4.2 of \cite{hott-intro} it follows that the type of decidable subtypes of a type $A$ is equivalent to the type
\begin{equation*}
  \sum_{(X:\UU)}X\demb A
\end{equation*}
of decidable embeddings into $A$. For the following definition, recall that if $B:\UU$ is a type, then we write
\begin{equation*}
  \UU_{(B)}\defeq \sum_{(X:\UU)}\brck{B\simeq X}.
\end{equation*}
In other words, the type $\UU_{(B)}$ is the \emph{connected component} of the universe at $B$.

\begin{definition}
  For any two types $A$ and $B$, we define the \define{binomial type}
  \begin{equation*}
    \binomial{A}{B}:=\sum_{Y:\universecomponent{B}}Y\demb A
  \end{equation*}
\end{definition}

In other words, the binomial type $\binomial{A}{B}$ is the type of decidable subtypes of $A$ of the same size as $B$. In particular, if $A$ is an $n$-element type and $B$ is an $k$-element type, then the type $\binomial{A}{B}$ of $k$-element subtypes of $A$ is an $\binomial{n}{k}$-element type.

\begin{proposition}[Theorem 17.6.9 of \cite{hott-intro}]
  If $A$ and $B$ are finite types with $n$ and $k$ elements respectively, then the type $\binomial{A}{B}$ is finite with $\binomial{n}{k}$ elements.
\end{proposition}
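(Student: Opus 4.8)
The plan is to reduce to standard finite types and then count by a Pascal-style recursion. The conclusion ``$\binomial{A}{B}$ is finite with $\binomial{n}{k}$ elements'' --- equivalently, $\Brck{\Fin{\binomial{n}{k}}\simeq\binomial{A}{B}}$ --- is a proposition, since $\isfinite$ is a proposition and the cardinality of a finite type is uniquely determined. So I may discard the truncations in $\isfinite(A)$ and $\isfinite(B)$ and work with genuine equivalences $A\simeq\Fin{n}$ and $B\simeq\Fin{k}$. Now $\binomial{A}{B}=\sum_{(Y:\UU_{(B)})}Y\demb A$ depends on $B$ only through the connected component $\UU_{(B)}$, and $\UU_{(B)}=\UU_{(\Fin{k})}$ because for every $X$ the propositions $\brck{B\simeq X}$ and $\brck{\Fin{k}\simeq X}$ are logically equivalent, hence equal; and post-composition with $A\simeq\Fin{n}$ turns a decidable embedding into $A$ into one into $\Fin{n}$. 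Hence $\binomial{A}{B}\simeq\binomial{\Fin{n}}{\Fin{k}}$, and it suffices to show the latter has $\binomial{n}{k}$ elements, where $\binomial{n}{k}$ may be unfolded via its defining recursion $\binomial{n}{0}=1$, $\binomial{0}{k+1}=0$, $\binomial{n+1}{k+1}=\binomial{n}{k}+\binomial{n}{k+1}$.

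Next I would prove the matching recursion at the level of types, by induction on $n$. By Theorem 17.4.2 of \cite{hott-intro}, recalled above, $\binomial{A}{\Fin{k}}$ is equivalent to the type of decidable subtypes $P$ of $A$ whose total space $\sum_{(a:A)}P(a)$ has $k$ elements. A decidable subtype of size $0$ must be the empty subtype, so $\binomial{A}{\Fin{0}}$ is contractible for every $A$ (matching $\binomial{n}{0}=1$, and also covering the $k=0$ branch of the inductive step), and there is no nonempty decidable subtype of $\emptyset$, so $\binomial{\Fin{0}}{\Fin{k+1}}$ is empty (matching $\binomial{0}{k+1}=0$). For the inductive step, write $\Fin{n+1}=\Fin{n}+\unit$; a decidable subtype of $\Fin{n}+\unit$ consists of a decidable subtype $P$ of $\Fin{n}$ together with a decidable proposition $Q$ (its value at $\unit$), and its total space is $\big(\sum_{(x:\Fin{n})}P(x)\big)+Q$. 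Since $Q$ is either true (contributing one element) or false (contributing none), requiring the total space to have $k+1$ elements splits into the case where $P$ has $k$ elements and the case where $P$ has $k+1$ elements, yielding an equivalence
\begin{equation*}
  \binomial{\Fin{n+1}}{\Fin{k+1}}\simeq\binomial{\Fin{n}}{\Fin{k}}+\binomial{\Fin{n}}{\Fin{k+1}}.
\end{equation*}
By the inductive hypothesis the right-hand side has $\binomial{n}{k}+\binomial{n}{k+1}=\binomial{n+1}{k+1}$ elements, using that cardinality is additive over coproducts.

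I expect the main obstacle to be assembling this coproduct decomposition cleanly: one needs the equivalence between decidable subtypes of $A+B$ and pairs of a decidable subtype of $A$ and a decidable subtype of $B$, the matching decomposition $\sum_{(z:A+B)}P(z)\simeq\big(\sum_{(a:A)}P(\mathsf{inl}\,a)\big)+\big(\sum_{(b:B)}P(\mathsf{inr}\,b)\big)$ of total spaces, and the observation that for $B=\unit$ the decidable-subtype factor is just a decidable proposition, contributing $0$ or $1$ to the cardinality (so that the two-element type $\dProp$ produces exactly the two cases above). None of these ingredients is deep, but carefully threading the ``exactly $k$ elements'' condition through them --- and keeping the bookkeeping between the natural number $\binomial{n}{k}$ and the type $\binomial{\Fin{n}}{\Fin{k}}$ straight --- is where the work lies. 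An alternative for the counting step is to count injections $\Fin{k}\hookrightarrow\Fin{n}$ and divide by $k!$, but this introduces division in $\N$ and a free-action/orbit-counting argument, so I would prefer the inductive decomposition above.
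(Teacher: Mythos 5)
Your proof is correct. The paper does not actually give an argument for this proposition: Section IV explicitly says "We will omit most proofs of our claims, but we note that they have been formalised," and the proposition is simply cited as Theorem 17.6.9 of Rijke's textbook. The Pascal-style recursion you propose is the standard route (and the one taken in that source and its \agdaunimath formalization): discard the truncations since the conclusion is a proposition, transport along $A\simeq\Fin{n}$ and $B\simeq\Fin{k}$ to reduce to $\binomial{\Fin{n}}{\Fin{k}}$, use the identification of the binomial type with decidable subtypes of a prescribed total-space size, and then establish $\binomial{\Fin{n}}{\Fin{0}}\simeq\unit$, $\binomial{\Fin{0}}{\Fin{k+1}}\simeq\emptyset$, and $\binomial{\Fin{n+1}}{\Fin{k+1}}\simeq\binomial{\Fin{n}}{\Fin{k}}+\binomial{\Fin{n}}{\Fin{k+1}}$. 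You have correctly flagged the ingredients that carry the weight — decidable subtypes of a coproduct split as pairs, the total space distributes over the coproduct, $\dProp\simeq\Fin{2}$, and (implicitly) the cancellation $\Fin{k+1}\simeq X+\unit\Rightarrow\Fin{k}\simeq X$ needed to pass from "total space has $k+1$ elements with $Q=\top$" to "$P$ has $k$ elements." Nothing is missing; your remark that the alternative count via injections and division by $k!$ is more awkward in $\N$ is also well taken.
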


The type $\binomial{X}{\Fin{2}}$ of decidable $2$-element subtypes of $X$ is of particular interest to us. Note that
\begin{equation*}
  \binomial{X}{\Fin{2}}\simeq \sum_{(P:X\to\dProp)}{\textstyle \Brck{\Fin{2}\simeq\sum_{(x:X)}P(x)}}.
\end{equation*}

\subsection*{Decidable equivalence relations on finite types}

\begin{definition}
  A \textbf{decidable equivalence relation} on a type $X$ is an equivalence relation $R$ on $X$ such that $R(x,y)$ is a decidable proposition for each $x,y:X$. We will write $\EqReld(X)$ for the type of all decidable equivalence relations on $X$.
\end{definition}

The quotient of a finite set by an equivalence relation $R$ is finite if and only if $R$ is a decidable equivalence relation. Indeed, a minor modification of Theorem 18.2.5 in \cite{hott-intro} shows that we have an equivalence
\begin{equation*}
  \EqReld(X)\simeq\sum_{(Y:\F)}X\twoheadrightarrow Y.
\end{equation*}
for any finite set $X$. We will also use the following fact, which is stated in more general form in Exercise 18.4 of \cite{hott-intro}, and which also has been formalized:

\begin{proposition}\label{finitely-indexed-sigma-decomposition}
  The type $\EqReld(X)$ of decidable equivalence relations on a finite set $X$ is equivalent to the type
  \begin{equation*}
    \Sigmadecomp_\F(X)\defeq \sum_{(Y:\F)}\sum_{(Z:Y\to \sum_{(Z:\F)}\|Z\|)}{\textstyle X\simeq \sum_{(y:Y)}Z(y)}
  \end{equation*}
  of \textbf{finitely indexed $\Sigma$-decompositions} of $X$.
\end{proposition}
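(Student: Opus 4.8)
The plan is to derive the finite statement from its non-finite counterpart by restriction to subtypes. First I would invoke the equivalence $\EqReld(X)\simeq\sum_{(Y:\F)}(X\twoheadrightarrow Y)$ recalled just above, so that it suffices to produce an equivalence $\sum_{(Y:\F)}(X\twoheadrightarrow Y)\simeq\Sigmadecomp_\F(X)$. The non-finite version of this, which is a form of Exercise~18.4 of \cite{hott-intro} (equivalently: the classification of maps into $Y$ by their families of fibres, combined with the identification of set-quotients with surjections), furnishes an equivalence
\[
  \Bigl(\textstyle\sum_{(Y:\UU)}(X\twoheadrightarrow Y)\Bigr)\;\simeq\;\Sigmadecomp(X),\qquad \Sigmadecomp(X)\defeq\textstyle\sum_{(Y:\UU)}\sum_{(Z:Y\to\sum_{(W:\UU)}\brck{W})}\bigl(X\simeq\sum_{(y:Y)}Z(y)\bigr),
\]
under which a surjection $f:X\twoheadrightarrow Y$ corresponds to the $\Sigma$-decomposition $(Y,\fib_f,e_f)$ given by its family of fibres, with $e_f$ the canonical equivalence $x\mapsto(f(x),(x,\refl))$; note that the fibres are automatically inhabited, matching the inhabitedness requirement built into $\Sigmadecomp(X)$. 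Now $\sum_{(Y:\F)}(X\twoheadrightarrow Y)$ is the subtype of $\sum_{(Y:\UU)}(X\twoheadrightarrow Y)$ on which $Y$ is finite, and $\Sigmadecomp_\F(X)$ is the subtype of $\Sigmadecomp(X)$ on which $Y$ is finite \emph{and} each piece $Z(y)$ is finite, so it remains only to check that these two subtypes correspond under the equivalence above.

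Since both are subtypes cut out by families of propositions (finiteness is a property), the correspondence can be verified at the level of the defining propositions: for a surjection $f:X\twoheadrightarrow Y$ I must show that ``$Y$ is finite'' is equivalent to ``$Y$ is finite and every $\fib_f(y)$ is finite''. This reduces to the single implication that finiteness of $Y$ forces each fibre $\fib_f(y)$ to be finite. That implication holds because a finite type is discrete, so $f(x)=y$ is a decidable proposition for every $x:X$; hence $\fib_f(y)=\sum_{(x:X)}(f(x)=y)$ is a decidable subtype of the finite type $X$, and decidable subtypes of finite types are again finite (\cref{sec:finite-types}). The converse implication is trivial, so the two subtypes match, and composing the resulting equivalence with $\EqReld(X)\simeq\sum_{(Y:\F)}(X\twoheadrightarrow Y)$ yields the claim.

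I expect the only genuinely nontrivial input to be the non-finite fact packaged as Exercise~18.4 — in particular the coherence in the families-versus-fibres classification, which for a $\Sigma$-decomposition $(Y,Z,e)$ amounts to comparing the given equivalence $e:X\simeq\sum_{(y:Y)}Z(y)$ with the canonical equivalence $X\simeq\sum_{(y:Y)}\fib_{\pr_1\circ e}(y)$ over the identification $\fib_{\pr_1\circ e}=Z$ obtained from univalence. Granting that, the remaining work is light: it is exactly the observation that finiteness of the fibres is automatic once the codomain is finite, which is where the theory of decidable subtypes of finite types is used, together with the general principle that an equivalence of types restricts to an equivalence between any pair of subtypes that match up under it. A self-contained proof avoiding Exercise~18.4 would have to carry out that coherence argument by hand, which is the one place I would expect real friction.
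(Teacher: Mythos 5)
Your argument is correct. The paper itself gives no proof of this proposition---it cites Exercise~18.4 of \cite{hott-intro} (in more general form) and points to the formalization---so your derivation is supplying the argument that the paper leaves implicit: compose $\EqReld(X)\simeq\sum_{(Y:\F)}(X\twoheadrightarrow Y)$ with the non-finite surjections-to-decompositions equivalence (the families/fibrations correspondence, with surjectivity matching merely-inhabited pieces), then restrict along matching subtypes. You correctly identify the one genuinely finite ingredient, namely that with $X$ finite and $Y$ finite (hence discrete), each $\fib_f(y)$ is a decidable subtype of $X$ and therefore finite, so the finiteness condition on $Y$ alone already cuts out $\Sigmadecomp_\F(X)$ on the other side.
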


\section{Transpositions}\label{sec:transpositions}

\begin{definition}
  For any $2$-element type $X$ we define the automorphism $\swap_X:X\simeq X$ as the composite of the equivalences
  \begin{equation*}
    \begin{tikzcd}
      X \arrow[r,"\evzero^{-1}"] & (\Fin{2}\simeq X) \arrow[r,"\suc^{*}"] & (\Fin{2}\simeq X) \arrow[r,"\evzero"] & X
    \end{tikzcd}
  \end{equation*}
  where $\suc:\Fin{2}\simeq\Fin{2}$ is the successor equivalence on $\Fin{2}$.
\end{definition}

The swap function on a $2$-element set is not homotopic to the identity function, and it is idempotent. The swap function allows us to define transpositions on an arbitrary type.

\begin{definition}
  An equivalence $e:X\simeq X$ on a discrete type $X$ is said to be a \textbf{transposition} on $X$ if the type
\begin{equation*}
  \support(e)\defeq \sum_{(x:X)}e(x)\neq x
\end{equation*}
is a $2$-element decidable subtype of $X$.
\end{definition}

\begin{proposition}
  Every $2$-element decidable subtype of a discrete type $X$ uniquely determines a transposition.
\end{proposition}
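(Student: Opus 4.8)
The plan is to establish a bijection between the type of $2$-element decidable subtypes of $X$ and the type of transpositions on $X$, and then extract from it the fact that every $2$-element decidable subtype determines a transposition. First I would recall that a $2$-element decidable subtype of $X$ is given by a decidable subtype $P:X\to\dProp$ together with a proof that $\sum_{(x:X)}P(x)$ is a $2$-element type. Given such data, I would construct a transposition $e:X\simeq X$ as follows: on the subtype $S\defeq\sum_{(x:X)}P(x)$, which is a $2$-element discrete type, we have the canonical automorphism $\swap_S:S\simeq S$ from the preceding definition; on the decidable complement $\complement{S}=\sum_{(x:X)}\neg P(x)$ we take the identity; and since $X\simeq S+\complement{S}$ (using decidability of $P$ and the fact that $P(x)$ is a proposition), we can glue these to obtain $e\defeq\swap_S+\id:X\simeq X$.

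Next I would verify that $e$ is indeed a transposition, i.e., that $\support(e)=\sum_{(x:X)}e(x)\neq x$ is a $2$-element decidable subtype of $X$ equal to $P$. The key point is that $\swap_S$ has no fixed points: for a $2$-element type this follows because $\swap_S$ is not homotopic to the identity (as noted after its definition) and, being an involution of a $2$-element type, it must therefore move every point — formally, one can evaluate through the equivalence $\evzero:(\Fin 2\simeq S)\simeq S$ and use that $\suc$ on $\Fin 2$ has no fixed points. Hence for $x:S$ we have $e(x)\neq x$, while for $x:\complement{S}$ we have $e(x)=x$, so $\support(e)$ is (equivalent to, hence as a subtype equal to) $S$, which is a $2$-element decidable subtype. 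Decidability of $\support(e)$ as a subtype also uses that $X$ is discrete, so $e(x)\neq x$ is a decidable proposition for each $x$.

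For uniqueness, I would show the assignment is injective: if $e,e'$ are transpositions with the same support $S$, then both restrict to automorphisms of the $2$-element type $S$ and to the identity on $\complement{S}$; since a $2$-element type has exactly two automorphisms and neither $e$ nor $e'$ restricted to $S$ is the identity (else a point of $S$ would be fixed), both restrict to $\swap_S$, so $e\htpy e'$, hence $e=e'$ by function extensionality and the fact that $X\simeq X$ is a set. Combining existence and uniqueness gives that the map from transpositions to their supports is an equivalence onto the $2$-element decidable subtypes, and in particular every $2$-element decidable subtype uniquely determines a transposition. The main obstacle I anticipate is the bookkeeping around the decomposition $X\simeq S+\complement{S}$ and transporting the $\swap$ construction along it coherently — in particular checking that $\support(\swap_S+\id)$ really is $P$ as a subtype of $X$, not merely an abstractly equivalent $2$-element type; this is where the explicit description of $\swap$ via $\evzero$ and the fixed-point-freeness of $\suc$ on $\Fin 2$ does the real work.
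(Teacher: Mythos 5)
Your proposal is correct and matches the paper's proof in essence: the paper also builds the transposition by applying $\swap$ on the $2$-element subtype and the identity on its (decidable) complement, and proves uniqueness by noting that two transpositions with support $P$ agree off $P$ by construction and agree on $P$ because both must move each of its two points. Your coproduct phrasing $X\simeq S+\complement{S}$ and your explicit appeal to the fixed-point-freeness of $\swap$ are just slightly more spelled-out versions of the same argument.
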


\begin{proof}
  Consider a $2$-element decidable subtype $P$ of $X$. Then we define $f:X\to X$ as follows:
  \begin{equation*}
    f(x)=
    \begin{cases}
      \pr_1(\swap(f(x),p)) & \text{if }p:P(x)\\
      x & \text{otherwise}.
    \end{cases}
  \end{equation*}
  Then it follows that the support of $f$ is equivalent to the $2$-element decidable subtype $P$.

  To show uniqueness, assume that $f$ and $g$ are two transpositions with support $P$. Since $X$ is assumed to be a discrete type, it follows that $f(x)=g(x)$ on the complement of $P$. Furthermore, since $P$ is a $2$-element subtype of $X$ and both $f(x)\neq x$ and $g(x)\neq x$ on $P$, it follows that $f(x)=g(x)$ on $P$. We conclude that $f=g$.
\end{proof}

Our next goal is to prove that the transpositions generate the symmetric group $S_n$. Of course, this fact can be proven in the usual algebraic way, as we have also done in our formalization in agda-unimath. There is, however, a nice univalent perspective on the fact that every permutation is uniquely up to reordering a product of cyclic permutations. 

\begin{definition}
  Consider a finite type $X$ of cardinality $n$. The type of \textbf{cyclic structures} on $X$ is defined by
  \begin{equation*}
    \cyc(X)\defeq \sum_{(f:X\to X)}\brck{(\Z/n,\suc)=(X,f)}
  \end{equation*}
  The type of \textbf{finite cycles} is defined to be
  \begin{equation*}
    \Cycle_{\F} \defeq \sum_{(X:\F)}\cyc(X).
  \end{equation*}
  If $(X,f)$ is a finite cycle, we will refer to $X$ as the \textbf{underlying type}.
\end{definition}

We recall from \cite{Symmetry} that the type
\begin{equation*}
  \Cycle_n\defeq\sum_{(X:\BS_n)}\cyc(X)
\end{equation*}
is a delooping of the cyclic group $\Z/n$. Note that $\Z/0=\Z$, so there is no finite cycle of order $0$. In particular, every finite cycle has an inhabited underlying type. More generally, a cycle can be defined to be a set bundle over the circle with connected total space, but we won't need this level of generality here.

\begin{definition}
  Consider a finite set $X$. A \textbf{cycle decomposition} of $X$ is a triple $(Y,C,e)$ consisting of a set $Y$ equipped with a family $C:Y\to \Cycle_\F$ of finite cycles and an equivalence
  \begin{equation*}
    e:X\simeq \sum_{y:Y}C(y).
  \end{equation*}
  We will write $\cycledecomposition(X)$ for the type of cycle decompositions of $X$.
\end{definition}

\begin{lemma}\label{lemma:cyclic-test}
  Consider a finite type $X$ of cardinality $n$, and consider a map $f:X\to X$. Then the proposition
  \begin{equation*}
    \brck{(\Z/n,\suc)=(X,f)}
  \end{equation*}
  holds if and only if for every $x,y:X$ there is a natural number $k$ such that
  \begin{equation*}
    f^k(x)=y.
  \end{equation*}
\end{lemma}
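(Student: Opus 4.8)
The plan is to prove the biconditional by establishing the two implications separately, reading the right-hand side as the genuine proposition $\prod_{(x,y:X)}\brck{\sum_{(k:\N)}f^{k}(x)=y}$, i.e.\ with a mere-existence quantifier on $k$; then both sides are propositions and, by the universal property of the propositional truncation, each implication reduces to a construction on untruncated data. The basic ingredient is that, by univalence, an identification $(\Z/n,\suc)=(X,f)$ in the type $\sum_{(Z:\UU)}(Z\to Z)$ is the same thing as an equivalence $e:\Z/n\simeq X$ together with a homotopy $e\circ\suc\htpy f\circ e$; iterating this homotopy by induction on $k$ gives $e\circ\suc^{k}\htpy f^{k}\circ e$, and since $\suc^{k}$ is addition of $k$ on $\Z/n$ this reads $f^{k}(e(a))=e(a+k)$.

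For the forward implication, since the goal is a proposition I may assume given such an $e$ and homotopy. Given $x,y:X$, set $a\defeq e^{-1}(x)$ and $b\defeq e^{-1}(y)$; it then suffices to exhibit (merely) a $k:\N$ with $a+k=b$ in $\Z/n$, for then $f^{k}(x)=e(a+k)=e(b)=y$. Such a $k$ exists because the canonical map $\N\to\Z/n$, $k\mapsto[k]$, is surjective for $n\geq 1$ (the case $n=0$ is degenerate: then $\Z/0=\Z$, which cannot be identified with a finite type, so the left-hand side is empty).

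For the backward implication, assume the transitivity hypothesis, and fix $x_{0}:X$ — legitimate since the goal is a proposition and, for $n\geq 1$, $X$ is inhabited. Applying the hypothesis to the pair $(f(x_{0}),x_{0})$ yields $j$ with $f^{j+1}(x_{0})=x_{0}$, so some strictly positive power of $f$ fixes $x_{0}$; let $p$ be the least such. Minimality of $p$ forces $x_{0},f(x_{0}),\dots,f^{p-1}(x_{0})$ to be pairwise distinct (from $f^{i}(x_{0})=f^{j}(x_{0})$ with $0\le i<j<p$, applying $f^{p-j}$ gives $f^{p-(j-i)}(x_{0})=x_{0}$ with $0<p-(j-i)<p$, contradicting minimality), while the hypothesis shows every $y:X$ is of the form $f^{m}(x_{0})=f^{m\bmod p}(x_{0})$, so these $p$ distinct elements exhaust $X$; hence $p=n$. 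Therefore $i\mapsto f^{i}(x_{0})$ is an injection $\Fin{n}\to X$, hence an equivalence (an injection between finite types of equal cardinality is an equivalence), and it carries the cyclic successor on $\Fin{n}$ to $f$ (using $f^{n}(x_{0})=x_{0}$ for the wrap-around); precomposing with the standard equivalence $\Z/n\simeq\Fin{n}$ gives an equivalence $e:\Z/n\simeq X$ with $e\circ\suc\htpy f\circ e$, whence the identification $(\Z/n,\suc)=(X,f)$, and so its truncation holds.

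The step I expect to be the crux is the core of the backward direction: distilling from the ``everything reaches everything'' hypothesis the concrete fact that $f$ is a single $n$-cycle — that the least positive period $p$ of $x_{0}$ equals $n$, and that the partial orbit $x_{0},\dots,f^{p-1}(x_{0})$ is both injective and exhaustive. Everything else is routine: the description of paths in $\sum_{(Z:\UU)}(Z\to Z)$ via univalence, surjectivity of $\N\to\Z/n$ for $n\geq 1$, and the fact that an injection (equivalently, a surjection) between finite types of the same cardinality is an equivalence. One should also keep in mind the degenerate case $n=0$, where $X$ is empty; the real content of the lemma is for $n\geq 1$.
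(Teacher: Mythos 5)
Your proof takes essentially the same route as the paper: both directions reduce, via univalence, to the description of an identification in $\sum_{(Z:\UU)}(Z\to Z)$ as an equivalence intertwining $\suc$ with $f$, and the converse constructs the bijection $e(k)\defeq f^k(x_0)$ from a chosen $x_0$. Your writeup is actually more careful than the paper's in the converse direction: the paper simply asserts that finiteness plus the hypothesis yields a unique $0\leq k<n$ with $f^k(x_0)=y$, while you spell out the minimal-period argument showing $x_0,\dots,f^{p-1}(x_0)$ are pairwise distinct and exhaustive with $p=n$, which is the content that assertion elides. You also rightly flag the $n=0$ degeneracy; in fact the biconditional as stated fails there (the right-hand side is vacuously true over an empty $X$ while the left is false, since $\Z/0=\Z$ is not finite), so the lemma carries an implicit $n\geq 1$ assumption that the paper's proof silently invokes when it says ``note that $X$ is inhabited.''
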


\begin{proof}
  In the forward direction, we may assume $(\Z/n,\suc)=(X,f)$. The fact that for any two integers $x$ and $y$ modulo $n$ there is a natural number $k$ such that
  \begin{equation*}
    \suc^k(x)=y
  \end{equation*}
  modulo $n$ is an elementary consequence of the fact that $1$ is a generator of $\Z/n$.

  For the converse direction, note that $X$ is inhabited, so we may assume an element $x:X$. Since $X$ has cardinality $n$, it follows that for every $y:X$ there is a unique natural number $0\leq k < n$ such that $f^k(x)=y$. This establishes a bijection $e:\Z/n\equiv X$ such that $e(k)=f^k(x)$. It follows that the square
  \begin{equation*}
    \begin{tikzcd}
      \Z/n \arrow[d,swap,"\suc"] \arrow[r,"e"] & X \arrow[d,"f"] \\
      \Z/n \arrow[r,swap,"e"] & X
    \end{tikzcd}
  \end{equation*}
  commutes, and hence we obtain an identification $(\Z/n,\suc)=(X,f)$.
\end{proof}

In the following theorem we will show via a fairly lengthy proof that the type of permutations on $X$ is equivalent to the type of cyclic decompositions of $X$. Since it takes some work to prove this claim, it is fair to ask what the benefit is of this univalent approach. One reason we think it is of interest is that in contrast with the usual algebraic approach, where each permutation is shown to be a composite of cyclic permutations and this composite is unique up to reordering, is that here we assert directly that the type of permutations on $X$ is a moduli space of decompositions of $X$ into cyclic types. Whereas in the traditional approach one needs to take a quotient that identifies factorizations that are the same up to reordering, here we will instead find an application of the univalence axiom. Our secondary motivation is that we are interested in seeing what mathematics from a univalent perspective looks like, and we find this a striking example.

\begin{theorem}\label{theorem:cycle-decompositions}
  Consider a finite set $X$. Then we have an equivalence
  \begin{equation*}
    \cycledecomposition(X)\simeq (X\simeq X).
  \end{equation*}
\end{theorem}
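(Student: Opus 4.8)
The plan is to build the equivalence $\cycledecomposition(X)\simeq(X\simeq X)$ by transporting a permutation $f:X\simeq X$ to the cycle decomposition obtained from its orbit equivalence relation, and conversely reassembling a permutation from a family of finite cycles. First I would observe that any $f:X\to X$ that happens to be an equivalence generates a decidable equivalence relation $R_f$ on $X$, where $R_f(x,y)$ holds when there merely exists $k:\N$ with $f^k(x)=y$; decidability comes from finiteness of $X$ (one only needs to search $k<n$), and it is an equivalence relation because $f$ is invertible, so $f^{-1}$ gives symmetry via some power $f^{m}$ with $f^m=f^{-1}$. Applying \cref{finitely-indexed-sigma-decomposition} (or rather the underlying $\Sigma$-decomposition), $R_f$ yields a finite set $Y$ of orbits together with an equivalence $X\simeq\sum_{(y:Y)}Z(y)$, and each block $Z(y)$ inherits the restriction of $f$; by \cref{lemma:cyclic-test} this restriction is exactly a cyclic structure on $Z(y)$, since within a single orbit every element is reachable from every other by a power of $f$. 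This produces the forward map $(X\simeq X)\to\cycledecomposition(X)$.

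For the other direction, given a cycle decomposition $(Y,C,e)$ with $C(y)=(Z(y),g_y)$ and $g_y$ a cyclic structure, I would form the permutation on $\sum_{(y:Y)}Z(y)$ that acts as $g_y$ on each fiber (this is an equivalence because each $g_y$ is, being identified with $(\Z/n_y,\suc)$), and transport it back along $e$ to get a permutation of $X$. The work is then to check that these two constructions are mutually inverse. Going $(X\simeq X)\to\cycledecomposition(X)\to(X\simeq X)$ is the more conceptual half: one must see that decomposing $X$ into $f$-orbits and then recombining the per-orbit restrictions of $f$ recovers $f$ itself, which is essentially the statement that $f$ preserves orbits and is determined by what it does on each orbit. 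Going the other way requires knowing that the orbit decomposition of the permutation assembled from $(Y,C,e)$ returns $(Y,C,e)$ up to the appropriate identifications; here the key point is that the orbits of $\coprod_y g_y$ are exactly the fibers over $Y$ — because each $(Z(y),g_y)$, being cyclic, is a single orbit, and distinct fibers cannot merge — so the induced $\Sigma$-decomposition agrees with the given one.

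The main obstacle I expect is precisely this last coherence: identifying the recomputed index set and cycle family with the original $(Y,C)$ requires the univalence axiom in an essential way, because there is no canonical bijection between two descriptions of the set of orbits — one must use that $\BS_n$-style connected components are closed under equivalence and that a cyclic structure is a torsor-like object with no distinguished basepoint. Concretely, I would phrase the equivalence on orbits as coming from the universal property that $\EqReld(X)\simeq\Sigmadecomp_\F(X)$ sends an equivalence relation to its canonical decomposition, and that the equivalence relation generated by $\coprod_y g_y$ is literally ``lies in the same fiber over $Y$'', whose canonical decomposition is $(Y,C,e)$ by construction. Packaging all of this is what makes the proof ``fairly lengthy'': each half of the round-trip is a diagram chase through \cref{finitely-indexed-sigma-decomposition}, \cref{lemma:cyclic-test}, and the action-on-equivalences machinery from \cref{sec:preliminary}, rather than a single slick argument. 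I would organize it as: (i) the orbit relation of a permutation; (ii) its $\Sigma$-decomposition has cyclic blocks; (iii) assembly of a permutation from a cycle decomposition; (iv) the two round-trips, with (iv) as the technical heart.
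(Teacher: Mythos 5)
Your proposal is built from the same ingredients as the paper's proof: the orbit relation $R_f(x,y)\defeq\exists_{(k:\N)}f^k(x)=y$ (decidable and symmetric by finiteness), the equivalence $\EqReld(X)\simeq\Sigmadecomp_\F(X)$ from \cref{finitely-indexed-sigma-decomposition}, and \cref{lemma:cyclic-test} to recognize the per-orbit restriction of $f$ as a cyclic structure. The divergence is purely architectural, but it matters. You propose a quasi-inverse pair and two round-trip checks, and you correctly flag that the round-trip $(Y,C,e)\mapsto f\mapsto(R_f,s_f)$ is the hard part: identifying the recomputed orbit set and cycle family with the given $(Y,C)$ is a transport-laden identification of dependent pairs. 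The paper sidesteps a direct quasi-inverse argument: it defines a single map $\psi$ from $\sum_{(R:\EqReld(X))}\prod_{(y:X/R)}\cyc(\fib_{q_R}(y))$ to $X\simeq X$ and proves each fiber $\fib_\psi(f)$ contractible, with $(R_f,s_f)$ as center. The uniqueness half is then handled by the \emph{recalibration tactic} of \cref{remark:recalibration}: first show $R=R_f$ for any $(R,s)$ in the fiber, and then, separately and only for the fixed base $R_f$, show that any $s$ with $\psi(R_f,s)=f$ equals $s_f$; \cref{remark:recalibration} says these two non-dependent facts together give contractibility without ever transporting $s$ along $R=R_f$. That device is exactly what resolves the obstacle you name but leave open, so your skeleton is right but still incomplete at the step you yourself call the technical heart; you would need either the recalibration trick or an explicit transport computation there.
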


\begin{proof}
  First, we note that by rearranging the data of cycle decompositions, we immediately obtain an equivalence
  \begin{equation*}
    \cycledecomposition(X)\simeq\sum_{((Y,Z,e):\Sigmadecomp_\F(X))}\prod_{(y:Y)}\cyc(Z_y)
  \end{equation*}
  Furthermore, via \cref{finitely-indexed-sigma-decomposition} we obtain an equivalence
  \begin{equation*}
    \cycledecomposition(X)\simeq\sum_{(R:\EqReld(X))}\prod_{(y:X/R)}\cyc(\fib_{q_R}(y))
  \end{equation*}
  This equivalence already makes use of the univalence axiom. By the above equivalences it suffices to construct an equivalence
  \begin{equation}\label{eq:unique-cycle-decomposition}
    \left(\sum_{(R:\EqReld(X))}\prod_{(y:X/R)}\cyc(\fib_{q_R}(y))\right)\simeq (X\simeq X).\tag{\textasteriskcentered}
  \end{equation}

  To construct the underlying map of this equivalence, consider a decidable equivalence relation $R$ on $X$. Let us write $F_y\defeq\fib_{q_R}(y)$ for any $y:X/R$, and consider a cyclic structure $s_y:F_y\to F_y$ on each $F_y$. Then each $s_y$ is an equivalence, so it follows that the map
  \begin{equation*}
    {\textstyle \tot(s):\Big(\sum_{(y:X/R)}F_y\Big)\to\Big(\sum_{(y:X/R)}F_y\Big)}
  \end{equation*}
  given by $\tot(s)(y,z)\defeq (y,s_y(z))$ is an equivalence. Since there is a canonical equivalence $e:X\simeq \sum_{(y:X/R)}F_y$, we obtain a unique equivalence $f_{R,s}:X\simeq X$ such that
  \begin{equation*}
    e(f_{R,s}(x))=\tot(s)(e(x)).
  \end{equation*}
  This defines a map
  \begin{equation*}
    {\psi:\left(\sum_{(R:\EqReld(X))}\prod_{(y:X/R)}\cyc(\fib_{q_R}(y))\right)\to (X\simeq X)}.
  \end{equation*}

  In order to show that the map $\psi$ is an equivalence, we show that for every equivalence $f:X\simeq X$ there is a unique equivalence relation $R$ equipped with a cyclic structure $s_y$ on each of its equivalence classes $\fib_{q_R}(y)$ such that $\psi(R,s)=f$.
  
  Given an equivalence $f:X\simeq X$, we define the equivalence relation $R_f$ induced by
  \begin{equation*}
    R_f(x,y)\defeq \exists_{(n:\N)}f^n(x)=y.
  \end{equation*}
  In other words, $R_f$ is the reflexive and transitive closure of the relation $x,y\mapsto f(x)=y$. Note that $R_f$ is symmetric because $X$ is assumed to be finite. We will write $q_f:X\to X/R_{f}$ for the quotient map.

  Next, we need to construct a cyclic structure on the equivalence classes $F_y\defeq \fib_{q_f}(y)$, for each $y:X/R_f$. Note that $f:X\simeq X$ restricts to an equivalence $s_{f,y}:F_y\simeq F_y$ because $R_f(x,f(x))$ holds trivially for every $x:X$. It follows immediately from \cref{lemma:cyclic-test} that the proposition
  \begin{equation*}
    \brck{(\Z/n,\suc)=(F_y,s_{f,y})}
  \end{equation*}
  holds. Furthermore, since $s_{f,y}$ is defined by restricting $f$, we have
  \begin{equation*}
    e(f(x))=\tot(s_f)(e(x)).
  \end{equation*}
  We conclude that $\psi(R_f,s_f)=f$, so we have constructed a center of contraction in $\fib_{\psi}(f)$.

  To show that there is at most one element in $\fib_{\psi}(f)$, consider an equivalence relation $R$ on $X$ with equivalence classes $F_y\defeq \fib_{q_R}(y)$ for $y:X/R$. Furthermore, consider a cyclic structure $s_y:F_y\to F_y$ for each $y:X/R$ such that $\psi(R,s)=f$. Then $f$ is the unique equivalence $X\simeq X$ such that
  \begin{equation*}
    e(f(x))=\tot(s)(e(x)),
  \end{equation*}
  where $e:X\simeq\sum_{(y:X/R)}F_y$. Since $s$ is a family cyclic structure on the equivalence classes of $R$, it follows from \cref{lemma:cyclic-test} and the unique characterization of $f$, that $R(x,y)$ holds if and only if there exists an $n:\N$ such that $f^n(x)=y$. This implies that $R=R_f$.

  To finish off the proof, we will use a \emph{recalibration tactic}: It suffices to show that for any cyclic structure $s$ on the equivalence classes of $R_f$ such that $\psi(R_f,s)=f$ holds, we must have $s=s_f$. This recalibration of $s$ from $R$ to $R_f$ allows us to bypass a complicated application of transport. See \cref{remark:recalibration} below for some further remarks.

  It remains to show that if we have a family of cyclic structures $s_y:F_y\to F_y$ on the equivalence classes of $R_f$ such that $\psi(R_f,s)=f$, then $s_y=s_{f,y}$ for each $y:X/R_f$. Since
  \begin{equation*}
    e(f(x))=\tot(s)(e(x)) \qquad\text{and}\qquad e(f(x))=\tot(s_f(e(x))
  \end{equation*}
  it follows that $\tot(s)=\tot(s_f)$. This implies that the restriction of $\tot(s)$ to $F_y$ is equal to the restriction of $\tot(s_f)$ to $F_y$. In other words, $s_y=s_{f,y}$.
\end{proof}

We finally arrive, via our univalent detour, at the classical result that the transpositions generate the symmetric groups.

\begin{proposition}
  The transpositions generate the automorphism group $X\simeq X$ of a finite set $X$.
\end{proposition}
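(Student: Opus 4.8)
The plan is to deduce the proposition from \cref{theorem:cycle-decompositions} together with \cref{lemma:cyclic-test}. For a fixed $f:X\simeq X$ the statement ``$f$ is a product of transpositions'' is a proposition --- the propositional truncation of the type of lists of transpositions of $X$ whose composite is $f$ --- and transpositions are involutions since $\swap$ is idempotent, so asserting that the transpositions generate the group $X\simeq X$ is exactly asserting that every $f:X\simeq X$ is merely such a product. Because this is a proposition I am free both to argue via the equivalence of \cref{theorem:cycle-decompositions} and to make arbitrary merely-existing choices along the way.

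By \cref{theorem:cycle-decompositions} --- more precisely, by the intermediate equivalence $\cycledecomposition(X)\simeq\sum_{(R:\EqReld(X))}\prod_{(y:X/R)}\cyc(\fib_{q_R}(y))$ and the explicit formula for $\psi$ from its proof --- it suffices to show that for every decidable equivalence relation $R$ on $X$ and every family of cyclic structures $s_y$ on the equivalence classes $F_y\defeq\fib_{q_R}(y)$, the permutation $\psi(R,s):X\simeq X$ is a product of transpositions. By construction $\psi(R,s)$ fixes everything outside each $F_y$ and restricts on $F_y$ to $s_y$. Since $R$ is decidable the quotient $X/R$ is a finite set, so --- the goal being a proposition --- I may choose an enumeration $X/R\simeq\Fin{m}$ and write $\psi(R,s)$ as the finite composite $\tilde s_{y_0}\circ\cdots\circ\tilde s_{y_{m-1}}$, where $\tilde s_y:X\simeq X$ acts by $s_y$ on the decidable subtype $F_y$ and by the identity elsewhere. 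It therefore suffices to show that each $\tilde s_y$ is a product of transpositions of $X$.

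Now I would reduce to cyclic permutations of standard finite cyclic types. A transposition of $F_y$ is given by a $2$-element decidable subtype of $F_y$; composing the decidable embeddings $\support\demb F_y\demb X$ shows such a subtype is also a $2$-element decidable subtype of $X$, so every transposition of $F_y$ extends, by the identity off $F_y$, to a transposition of $X$. Hence it is enough to show that each cyclic permutation $s_y$ of $F_y$ is a product of transpositions of $F_y$. Let $k$ be the cardinality of $F_y$, so $k\geq 1$ since the empty type carries no cyclic structure (recall $\Z/0=\Z$). Using again that ``being a product of transpositions'' is a proposition, the witness of $\brck{(\Z/k,\suc)=(F_y,s_y)}$ lets me assume $(F_y,s_y)=(\Z/k,\suc)$ --- transpositions being stable under this identification --- so it remains to factor $\suc:\Z/k\simeq\Z/k$. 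On $\Z/k$ the total order provides the $2$-element decidable subtypes $\{i,i{+}1\}$ for $0\leq i\leq k-2$, hence transpositions $\tau_i$, and a direct computation gives $\suc=\tau_0\circ\cdots\circ\tau_{k-2}$ (an empty composite when $k=1$), which completes the proof.

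The step I expect to be the main obstacle is the bookkeeping in the second paragraph: reading off from the proof of \cref{theorem:cycle-decompositions} the precise sense in which $\psi(R,s)$ acts ``one equivalence class at a time'', and justifying the passage to an \emph{honest} $m$-fold composite by enumerating the finite set $X/R$ --- legitimate only because the goal is a proposition, a point worth stating explicitly. The remaining ingredients (transpositions extend along decidable-subtype inclusions and are stable under equivalences; $\suc$ on $\Z/k$ is a product of adjacent transpositions) are classical and routine.
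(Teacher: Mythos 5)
Your proposal is correct and follows essentially the same route as the paper's own proof: invoke \cref{theorem:cycle-decompositions} to decompose the permutation cycle-by-cycle, exploit the fact that the goal is a proposition to enumerate the (finite) index set and to pick representatives $(\Z/k,\suc)$ for each cycle, and then write $\suc$ on $\Z/k$ as a product of adjacent transpositions. The only difference is cosmetic: you work through the intermediate equivalence in the proof of \cref{theorem:cycle-decompositions} and are more explicit that transpositions of a decidable subtype $F_y\demb X$ extend to transpositions of $X$, a point the paper leaves implicit when it introduces the automorphisms $f_y$.
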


\begin{proof}
  The claim is that the map
  \begin{equation*}
    {\textstyle \lst\binomial{X}{\Fin{2}}}\to (X\simeq X)
  \end{equation*}
  given by composing the listed transpositions is a surjective map. By \cref{theorem:cycle-decompositions} it follows that for every automorphism $f:X\simeq X$ there is a unique cycle decomposition $(Y,C,e)$ of $X$ such that
  \begin{equation*}
    e(f(x))=(y,s_y(z)),\qquad\text{where}\qquad e(x)=(y,z).
  \end{equation*}
  Since we're proving a proposition we may assume that $Y=\Fin{m}$ where $m$ is the number of cycles of the automorphism $f$. Then we have $C:\Fin{m}\to \Cycle_{\F}$. Furthermore, we may assume that $(\Z/k,\suc)=C_y$ for each $y:\Fin{m}$. Then there is for each $y:\Fin{m}$ a unique automorphism $f_y:X\simeq X$ such that
  \begin{equation*}
    e(f_y(x)) \defeq
    \begin{cases}
      (y,\suc(z)) & \text{if }e(x)=(y,z)\text{ for some }z \\
      e(x) & \text{otherwise.}
    \end{cases}
  \end{equation*}
  Since the supports of the maps $f_y$, i.e., the type of elements $x:X$ such that $f_y(x)\neq x$, are pairwise disjoint, we have
  \begin{equation*}
    f=f_0\circ\cdots\circ f_{m-1}.
  \end{equation*}

  We therefore see that it suffices to show that the successor map $\suc:\Z/k\to\Z/k$ is a composite of transpositions, for any $k\geq 1$. This is easy:
  \begin{equation*}
    \suc=\transposition{0}{1}\transposition{1}{2}\cdots\transposition{(k-2)}{(k-1)}.\qedhere
  \end{equation*}
\end{proof}

\begin{remark}\label{remark:recalibration}
  The recalibration tactic used in the proof of \cref{theorem:cycle-decompositions} can be made formally precise. Consider a type $A$ equipped with $a:A$ and a type family $B$ over $A$ equipped with $b:B(a)$. If the following two conditions hold, then the total space $\sum_{(x:A)}B(x)$ is contractible:
  \begin{enumerate}
  \item For any pair $(x,y):\sum_{(x:A)}B(x)$ we have $a=x$.
  \item For any $y:B(x)$ we have $b=y$.
  \end{enumerate}
  Indeed, the second condition implies that $B(a)$ is contractible. This implies that the first condition suffices to prove contractibility of the total space.
\end{remark}

\begin{remark}
  The idea that the type of automorphisms of a finite set $X$ is equivalent to the type of cycle decompositions of $X$ can be extended to endomorphisms of $X$. The type of maps $X\to X$ is equivalent to the type of quadruples $(Y,C,T,e)$ consisting of
  \begin{enumerate}
  \item a finite type $Y$,
  \item a family $C:Y\to \Cycle_\F$ of finite cycles over $Y$,
  \item a family $T:\prod_{(y:Y)}C(y)\to\mathsf{Tree}_\F$ of finite (rooted) trees, where
    \begin{equation*}
      \mathsf{Tree}_\F\defeq\mathsf{W}_{(X:\F)}X,
    \end{equation*}
    and
  \item and equivalence
    \begin{equation*}
      e:X\simeq \sum_{(y:Y)}\sum_{(z:C(y))}T(y,z),
    \end{equation*}
    where the underlying type of a tree is defined to be its type of nodes.
  \end{enumerate}
  The reader is invited to establish this equivalence, by taking an endomorphism $f : X \to X$ on a finite type $X$ and picturing the graph with vertices X and an edge from $x$ to $f(x)$, as a collection of disjoint cycles, with a finite tree attached to each vertex of each cycle.
\end{remark}

\section{Detecting deloopings of the sign homomorphism}\label{sec:detection-sign-homomorphism}

Consider a pointed map
\begin{equation*}
  Q:\BS_n\to_\ast \BS_2
\end{equation*}
where $n\geq 2$, with $\varphi:Q_{\Fin{n}}\simeq\Fin{2}$ witnessing that $Q$ is a pointed map. In this section we give a precise characterization of when $Q$ is a delooping of the sign homomorphism. The map $Q$ is said to be a delooping of the sign homomorphism if the square
\begin{equation*}
  \begin{tikzcd}
    S_n \arrow[r,"\sign"] \arrow[d,swap,"\eqequiv"] & S_2 \arrow[d,"\eqequiv"] \\
    \loopspace\BS_n \arrow[r,swap,"\loopspace Q"] & \loopspace\BS_2
  \end{tikzcd}
\end{equation*}
commutes. Note that the case $n=1$ is handled separately: The type of pointed maps $\BS_1\to_\ast\BS_2$ is contractible, and all pointed maps in this contractible type are deloopings of the sign homomorphism.

By the equivalence $\varphi$, we obtain a base point $q:Q_{\Fin{n}}$. Note also that via univalence, we have an action on equivalences
\begin{equation*}
  e\mapsto Q_e:(X\simeq Y)\to (Q_X\simeq Q_Y),
\end{equation*}
indexed by $X,Y:\BS_n$, which is the unique family of maps for which the square
\begin{equation*}
  \begin{tikzcd}
    (X=Y) \arrow[r,"\ap_Q"] \arrow[d,swap,"\equiveq"] & (Q_X=Q_Y) \arrow[d,"\equiveq"] \\
    (X\simeq Y) \arrow[r,swap,"e\mapsto Q_e"] & (Q_X\simeq Q_Y)
  \end{tikzcd}
\end{equation*}
commutes. Furthermore, we obtain a commuting square
\begin{equation*}
  \begin{tikzcd}
  (\Fin{n}=\Fin{n}) \arrow[r,"\loopspace Q"] \arrow[d,swap,"\equiveq"] & (\Fin{2}=\Fin{2}) \arrow[d,"\simeq"] \\
  (\Fin{n}\simeq \Fin{n}) \arrow[r,swap,"e\mapsto Q_e"] & (Q_{\Fin{n}}\simeq Q_{\Fin{n}}).
  \end{tikzcd}
\end{equation*}

\begin{definition}
  We define $\sign^Q : S_n \to S_2$ to be the unique map equipped with identifications
  \begin{equation*}
    \sign^Q(e) =
    \begin{cases}
      1 & \text{if }q=Q_e(q) \\
      -1 & \text{otherwise}
    \end{cases}
  \end{equation*}
\end{definition}

\begin{lemma}\label{signsign'equality}
  If $Q_{\transposition{i}{j}}$ swaps the elements of $Q_{\Fin{n}}$ for every transposition $\transposition{i}{j}$ on $\Fin{n}$, then it follows that $\sign$ and $\sign^Q$ are equal on transpositions.
\end{lemma}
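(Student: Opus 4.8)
The plan is to reduce the statement to a single fact: the swap automorphism of a $2$-element type is fixed-point-free.

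First I would recall that, by the characterisation of the sign homomorphism given in the introduction, $\sign$ sends every transposition $\transposition{i}{j}$ on $\Fin{n}$ to the non-identity element $-1$ of $S_2$. Since every transposition on the discrete type $\Fin{n}$ arises as $\transposition{i}{j}$ for some $i\neq j$ (by the proposition that every $2$-element decidable subtype of a discrete type determines a unique transposition), it suffices to show that $\sign^Q(\transposition{i}{j})=-1$ for every transposition $\transposition{i}{j}$ on $\Fin{n}$. Unfolding the definition of $\sign^Q$, this is exactly the statement that $q\neq Q_{\transposition{i}{j}}(q)$.

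By the hypothesis of the lemma, $Q_{\transposition{i}{j}}$ is the swap automorphism $\swap_{Q_{\Fin{n}}}$ of the $2$-element type $Q_{\Fin{n}}$, so it remains to prove that $\swap_Z(z)\neq z$ for every $2$-element type $Z$ and every $z:Z$. For this I would invoke the corollary that the evaluation map $\evzero:(\Fin{2}\simeq Z)\to Z$ is an equivalence: setting $e:=\evzero^{-1}(z):\Fin{2}\simeq Z$, so that $e(0)=\evzero(e)=z$, and unfolding the definition $\swap_Z=\evzero\circ\suc^{*}\circ\evzero^{-1}$ (where $\suc^{*}(e)=e\circ\suc$), we obtain $\swap_Z(z)=\evzero(e\circ\suc)=e(\suc(0))=e(1)$. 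Since $e$ is injective and $0\neq 1$ in $\Fin{2}$, this gives $\swap_Z(z)=e(1)\neq e(0)=z$, as needed.

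I do not anticipate a genuine obstacle here. The only point requiring some care is that the fact ``$\swap_Z$ is not homotopic to the identity'' is a priori weaker than ``$\swap_Z$ is fixed-point-free'', and it is exactly the explicit computation with $\evzero$ that bridges this gap. One could instead argue that any self-equivalence of a $2$-element type that fixes a point must be the identity---using that $\sum_{(y:Z)}z\neq y$ is contractible---but the direct computation above is shorter and sidesteps that detour.
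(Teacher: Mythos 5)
Your proof is correct and follows essentially the same route as the paper's: observe that the hypothesis gives $q\neq Q_{\transposition{i}{j}}(q)$, hence $\sign^Q(\transposition{i}{j})=-1=\sign(\transposition{i}{j})$. The paper treats the step from ``$Q_{\transposition{i}{j}}$ swaps the elements'' to ``$q\neq Q_{\transposition{i}{j}}(q)$'' as immediate, whereas you explicitly unfold $\swap_Z=\evzero\circ\suc^{*}\circ\evzero^{-1}$ to verify that the swap automorphism is fixed-point-free; this is a reasonable bit of extra care, closing the small gap you yourself identify between ``not homotopic to the identity'' and ``fixed-point-free.''
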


\begin{proof}
  Let $i$ and $j$ be two elements of $\Fin{n}$ such that $i < j$. Then $q\neq Q_{\transposition{i}{j}}(q)$ by assumption. This implies that $\sign^Q\transposition{i}{j}=-1=\sign\transposition{i}{j}$.
\end{proof}

\begin{lemma}\label{2ndcommutingsquare}
  Consider a permutation $e:S_n$. Then the square
  \begin{equation*}
    \begin{tikzcd}
      {Q_{\Fin{n}}} \arrow[d,swap,"\varphi"] \arrow[r,"Q_e"] & {Q_{\Fin{n}}} \arrow[d,"\varphi"] \\
      {\Fin{2}} \arrow[r,swap,"\sign^Q(e)"] & {\Fin{2}}
    \end{tikzcd}
  \end{equation*}
  commutes.
\end{lemma}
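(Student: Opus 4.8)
The plan is to transport the entire square along the equivalence $\varphi : Q_{\Fin n}\simeq\Fin 2$ and thereby reduce to a statement about automorphisms of $\Fin 2$. Since $\varphi$ is an equivalence, commutativity of the square is equivalent to the identification
\begin{equation*}
  \varphi\circ Q_e\circ\varphi^{-1}=\sign^Q(e)
\end{equation*}
in $S_2=(\Fin 2\simeq\Fin 2)$. Note that the left-hand side really is an element of $S_2$: the map $Q_e:Q_{\Fin n}\simeq Q_{\Fin n}$ is the action of $Q$ on equivalences, hence an equivalence, and $\varphi$ and $\varphi^{-1}$ are equivalences.

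First I would invoke the corollary that $\evzero:(\Fin 2\simeq\Fin 2)\to\Fin 2$ is an equivalence, so that two elements of $S_2$ are equal as soon as they agree at $0$. It therefore suffices to show $\varphi(Q_e(\varphi^{-1}(0)))=\sign^Q(e)(0)$. Writing $q\defeq\varphi^{-1}(0)$ for the base point of $Q_{\Fin n}$, so that $\varphi(q)=0$, the left-hand side is $\varphi(Q_e(q))$; for the right-hand side I would record that $1$ is the identity automorphism of $\Fin 2$, so $1(0)=0$, whereas $-1$ is the non-identity automorphism, so $(-1)(0)$ is the element of $\Fin 2$ distinct from $0$.

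Then I would case-split exactly as in the definition of $\sign^Q(e)$. If $q=Q_e(q)$, then $\sign^Q(e)=1$, so $\sign^Q(e)(0)=0$, and $\varphi(Q_e(q))=\varphi(q)=0$; both sides agree. If $q\neq Q_e(q)$, then $\sign^Q(e)=-1$, so $\sign^Q(e)(0)$ is the element of $\Fin 2$ distinct from $0$; and since $\varphi$ is an equivalence, hence injective, $Q_e(q)\neq q$ gives $\varphi(Q_e(q))\neq\varphi(q)=0$, so, $\Fin 2$ being a $2$-element type, $\varphi(Q_e(q))$ is likewise the unique element of $\Fin 2$ distinct from $0$. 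In both cases the two sides agree at $0$, so the square commutes.

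I do not expect a serious obstacle here; the only mildly delicate point is the second case, where one must use that $Q_{\Fin n}$ and $\Fin 2$ each have exactly two elements to convert ``$Q_e(q)$ is different from $q$'' into ``$\varphi(Q_e(q))$ is different from $0$'' and then pin this down to a specific element of $\Fin 2$.
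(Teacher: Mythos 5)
Your proof is correct and follows essentially the same route as the paper's: reduce commutativity of a square of equivalences between $2$-element types to checking it at a single point (the base point), then case-split on whether $q = Q_e(q)$. The only micro-level difference is in the second case, where the paper uses the identification $\swap(q) = Q_e(q)$ together with the compatibility of $\varphi$ with $\swap$, while you instead use injectivity of $\varphi$ and uniqueness of the non-basepoint element of $\Fin{2}$ — both ways of pinning down the same element.
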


\begin{proof}
  First note that every type in the asserted square is a $2$-element type, and all the maps in the square are equivalences. Therefore it suffices to show that it commutes at one value, i.e., it suffices to show that
  \begin{equation*}
    \sign^Q(e)(\varphi(q))=\varphi(Q_e(q)).
  \end{equation*}
  Since $Q_{\Fin{n}}$ has decidable equality, we may proceed by case analysis on $q=Q_e(q)$ or $q\neq Q_e(q)$.

  If $q=Q_e(q)$, then we have the identifications $\sign^Q(e)=1$ and $\varphi(q)=\varphi(Q_e(q))$. This proves the claim in the first case.

  If $q\neq Q_e(q)$, then we have the identifications $\sign^Q(e)=-1$ and $\swap(q)=Q_e(q)$. It follows that
  \begin{equation*}
    \sign^Q(e)(\varphi(q))=\varphi(\swap(q))=\varphi(Q_e(q)).
  \end{equation*}
  This proves the claim in the second case.
\end{proof}

\begin{proposition}\label{proposition:Q-deloops-sign}
  Suppose that $Q_{\transposition{i}{j}}$ swaps the elements of $Q_{\Fin{n}}$ for every transposition $\transposition{i}{j}$ on $\Fin{n}$. Then we have a commuting square
  \begin{equation*}
    \begin{tikzcd}
      S_n \arrow[r,"\sign"] \arrow[d, "\simeq"] & S_2 \arrow[d, "\simeq"]\\
      \loopspace \BS_n \arrow[r,swap,"\loopspace\sigma"] & \loopspace \BS_2
    \end{tikzcd}
  \end{equation*}
  It follows that $\sign^Q=\sign$.
\end{proposition}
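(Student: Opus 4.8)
The plan is to combine \cref{signsign'equality} and \cref{2ndcommutingsquare} with the fact, just proved, that the transpositions generate $S_n$. First I would unfold the bottom edge $\loopspace Q$ of the square. For a permutation $e:S_n$, the loop $\eqequiv(e):\Fin{n}=\Fin{n}$ is sent by $\ap_Q$ to a loop at $Q_{\Fin{n}}$ which, by the defining square relating $\ap_Q$, $\equiveq$ and the action $e\mapsto Q_e$ displayed at the start of this section, corresponds to $Q_e$; the pointed structure of $Q$ then contributes a conjugation by the pointing equivalence $\varphi:Q_{\Fin{n}}\simeq\Fin{2}$, so that $\loopspace Q(\eqequiv(e))=\eqequiv(\varphi\circ Q_e\circ\varphi^{-1})$. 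By \cref{2ndcommutingsquare} the conjugate $\varphi\circ Q_e\circ\varphi^{-1}$ is exactly $\sign^Q(e)$, so $\loopspace Q\circ\eqequiv=\eqequiv\circ\sign^Q$ as maps $S_n\to\loopspace\BS_2$.

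Hence the square in the statement commutes as soon as $\sign^Q=\sign$, which I would prove next. Both maps are group homomorphisms $S_n\to S_2$: for $\sign$ this is the classical fact recalled in \cref{sec:preliminary}, and for $\sign^Q$ it follows from the identification $\sign^Q(e)=\varphi\circ Q_e\circ\varphi^{-1}$ of \cref{2ndcommutingsquare}, since $e\mapsto Q_e$ is the functorial action on equivalences ($Q_{\id}=\id$ and $Q$ respects composition) and conjugation by $\varphi$ is a group isomorphism. By the preceding proposition the transpositions generate $S_n$, and $S_2$ is a set, so it suffices to check that $\sign$ and $\sign^Q$ agree on transpositions; under the standing hypothesis that $Q_{\transposition{i}{j}}$ swaps the elements of $Q_{\Fin{n}}$ this is exactly \cref{signsign'equality}. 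Combining the two paragraphs yields the commuting square, and the final assertion $\sign^Q=\sign$ of the proposition is precisely what was established en route.

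The step I expect to be the main obstacle is the unfolding of $\loopspace Q$ in the first paragraph: one must be careful that $\loopspace Q$ is not literally $\ap_Q$ restricted to loops but its conjugate by the identification $Q_{\Fin{n}}=\Fin{2}$ underlying $\varphi$, and one must match this conjugation against both the defining square of the action on equivalences and \cref{2ndcommutingsquare}. Once this bookkeeping is in place, the rest is a formal consequence of the two lemmas, of the generation of $S_n$ by transpositions, and of the fact that equality of homomorphisms into a set can be checked on generators.
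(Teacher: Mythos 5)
Your proof is correct and follows essentially the same route as the paper: both reduce the claim to the commuting square with $\sign^Q$ via \cref{2ndcommutingsquare}, then combine generation of $S_n$ by transpositions with \cref{signsign'equality} to conclude $\sign^Q=\sign$. The one minor variation is how $\sign^Q$ is seen to be a group homomorphism---the paper reads this off the commuting square (since $\loopspace Q$ is a loop map and hence a group homomorphism, with group isomorphisms on the vertical edges), whereas you invoke the functoriality of $e\mapsto Q_e$ and conjugation by $\varphi$ directly; the two justifications amount to the same fact.
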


\begin{proof}
  It suffices to show that the square
  \begin{equation*}
    \begin{tikzcd}
      S_n \arrow[r,"\sign^Q"] \arrow[d,swap,"\simeq"] & S_2 \arrow[d, "\simeq"]\\
      \loopspace \BS_n \arrow[r,swap,"\loopspace Q"] & \loopspace \BS_2
    \end{tikzcd}
  \end{equation*}
  commutes. Indeed, if this square commutes, then it follows that $\sign^Q$ is a group homomorphism. Since the permutation group $S_n$ is generated by the transpositions, and since $\sign\transposition{i}{j}=\sign^Q\transposition{i}{j}$ for every transposition $\transposition{i}{j}$ on $\Fin{n}$, it follows that $\sign$ and $\sign^Q$ are equal group homomorphisms.
  
  The square with $\sign^Q$ commutes if and only if the triangle
  \begin{equation*}
    \begin{tikzcd}[column sep=large]
      S_n \arrow[r,"e\mapsto Q_e"] \arrow[dr,swap,"\sign^Q"] & (Q_{\Fin{n}}\simeq Q_{\Fin{n}}) \arrow[d,"\simeq"] \\
      & S_2
    \end{tikzcd}
  \end{equation*}
  commutes. This follows directly from \cref{2ndcommutingsquare}.
\end{proof}

We conclude that $\sigma$ is a delooping of the sign homomorphism.

\begin{theorem}\label{theorem:recognition-delooping-sign-homomorphism}
  Consider a pointed map $Q:\BS_n\to_\ast \BS_2$, where $n\geq 2$. The following are equivalent:
  \begin{enumerate}
  \item The total space
    \begin{equation*}
      \sum_{(X:\BS_n)}Q_X
    \end{equation*}
    is connected.
  \item The map $Q$ is connected.
  \item The map
    \begin{equation*}
      e\mapsto Q_e:S_n\to (Q_{\Fin{n}}\simeq Q_{\Fin{n}})
    \end{equation*}
    is surjective.
  \item The equivalence $Q_{\transposition{i}{j}}:Q_{\Fin{n}}\simeq Q_{\Fin{n}}$ swaps the elements of $Q_{\Fin{n}}$ for any transposition $\transposition{i}{j}$ on $\Fin{n}$.
  \item The square
    \begin{equation*}
      \begin{tikzcd}
        S_n \arrow[r,"\sign"] \arrow[d,swap,"\simeq"] & S_2 \arrow[d, "\simeq"]\\
        \loopspace \BS_n \arrow[r,swap,"\loopspace Q"] & \loopspace \BS_2
      \end{tikzcd}
    \end{equation*}
    commutes.
  \end{enumerate}
\end{theorem}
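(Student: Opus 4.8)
The plan is to prove a loop of implications $(1)\Rightarrow(2)\Rightarrow(3)\Rightarrow(4)\Rightarrow(5)\Rightarrow(1)$, in which the step $(4)\Rightarrow(5)$ is exactly \cref{proposition:Q-deloops-sign}; several of these implications turn out to be biconditional, which upgrades the loop to the full chain of equivalences. Three standing facts are used repeatedly. First, the action on equivalences $e\mapsto Q_e$ is functorial, by equivalence induction from the defining property $Q_{\id}=\id$, so $e\mapsto Q_e\colon S_n\to (Q_{\Fin n}\simeq Q_{\Fin n})$ is a group homomorphism. Second, since $Q_{\Fin n}$ is a $2$-element type, its automorphism group $Q_{\Fin n}\simeq Q_{\Fin n}$ has exactly two elements, the identity and $\swap_{Q_{\Fin n}}$, and is in particular commutative. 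Third, the commuting square displayed just before the definition of $\sign^Q$ identifies $\loopspace Q$ with $e\mapsto Q_e$ up to the equivalences $\equiveq$ and $\varphi$; in particular $\loopspace Q$ is surjective if and only if $e\mapsto Q_e$ is surjective.

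For $(1)\Leftrightarrow(2)$ I would first identify the fiber of $Q$ over its base point: using univalence inside $\BS_2$ and the earlier corollary that evaluation at $0$ gives an equivalence $(\Fin 2\simeq Y)\simeq Y$, one obtains $\fib_Q(\ast)\simeq\sum_{(X:\BS_n)}Q_X$. Since $\BS_2$ is connected, all fibers of $Q$ are merely equivalent; as being connected is invariant under equivalence, $Q$ is a connected map precisely when this single fiber is connected, which is hypothesis $(1)$. For $(1)\Leftrightarrow(3)$ I would argue directly: reducing along connectedness of $\BS_n$ and unfolding identifications in $\sum_{(X:\BS_n)}Q_X$, condition $(1)$ says that for all $u,v:Q_{\Fin n}$ there is $e:S_n$ with $Q_e(u)=v$, and since $e\mapsto Q_e$ is a homomorphism and $Q_{\Fin n}$ has two elements, this is equivalent to the existence of some $e:S_n$ with $Q_e=\swap_{Q_{\Fin n}}$. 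But this is precisely $(3)$, because $\swap_{Q_{\Fin n}}$ is the only element of the target besides $\id=Q_{\id}$. (If one prefers to pass through loop spaces, $(2)\Leftrightarrow(3)$ also follows from the standard fact that a pointed map between pointed connected types is connected if and only if it is surjective on fundamental groups, using the fiber sequence $\loopspace\BS_2\to\fib_Q(\ast)\to\BS_n$.)

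The step $(3)\Rightarrow(4)$ is the heart of the argument, and it is where the combinatorics of $S_n$ enters. Since $e\mapsto Q_e$ is a homomorphism into a commutative group, it sends conjugate permutations to equal automorphisms; as all transpositions on $\Fin n$ are mutually conjugate in $S_n$, the automorphism $Q_{\transposition i j}$ is independent of the transposition, and so it equals either $\id$ for every transposition or $\swap_{Q_{\Fin n}}$ for every transposition. If it were $\id$ for all transpositions then, because the transpositions generate $S_n$ and $e\mapsto Q_e$ is a homomorphism, every $Q_e$ would be $\id$, contradicting surjectivity of $e\mapsto Q_e$ (the target has two elements since $n\geq 2$). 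Hence $Q_{\transposition i j}=\swap_{Q_{\Fin n}}$ for every transposition, which is $(4)$. Then $(4)\Rightarrow(5)$ is \cref{proposition:Q-deloops-sign}; and for $(5)\Rightarrow(1)$ one observes that commutativity of the sign square forces $\loopspace Q$ to be surjective, since $\sign$ is surjective and the vertical maps are equivalences, so $e\mapsto Q_e$ is surjective, which is $(3)$, whence $(1)$ by the direct argument above.

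The main obstacle is the step $(3)\Rightarrow(4)$. The other steps are either a direct appeal to \cref{proposition:Q-deloops-sign} or routine bookkeeping: identifying a fiber, and the standard facts that connected maps between connected types are detected on a single fiber and on the fundamental group. By contrast, $(3)\Rightarrow(4)$ genuinely relies on the proposition that the transpositions generate $S_n$ and on the classical fact that they form a single conjugacy class, together with commutativity of $S_2$; this is exactly the step that upgrades mere surjectivity of $e\mapsto Q_e$ to the statement that $Q$ behaves like the sign on \emph{every} transposition, which is what makes the recognition criterion convenient in the later constructions.
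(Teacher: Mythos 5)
Your proposal is correct and follows essentially the same route as the paper: identify the total space with the fiber of $Q$ for $(1)\Leftrightarrow(2)$, reduce $(2)\Leftrightarrow(3)$ to surjectivity of $\loopspace Q$, use that transpositions generate $S_n$ and form a single conjugacy class (with commutativity of the two-element target) for $(3)\Leftrightarrow(4)$, and invoke \cref{proposition:Q-deloops-sign} for $(4)\Leftrightarrow(5)$. The only differences are cosmetic — you phrase $(3)\Rightarrow(4)$ contrapositively where the paper writes out an explicit conjugation identity, and you offer a direct unfolding of $(1)$ as an alternative to passing through $(2)$.
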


\begin{proof}
  The fact that 1) is equivalent to 2) follows from the fact that the total space of $Q$ is equivalent to the fiber of the map $Q:\BS_n\to_\ast \BS_2$, which was proven in Theorem II.7 of \cite{BR2017realprojective}.
  The fact that 2) is equivalent to 3) follows, since $Q$ is a surjective map, so it is connected if and only if $\loopspace Q$ is surjective, which is an equivalent way of phrasing 3). It is immediate that 4) implies 3). To see that 3) implies 4), recall that the transpositions generate the permutation group $S_n$. Therefore it follows from 3) that there is a transposition $\transposition{i}{j}$ such that $Q_{\transposition{i}{j}}$ swaps the elements of $Q_{\Fin{n}}$. However, if one of the transpositions swaps the elements of $Q_{\Fin{n}}$, then all of them do, since
  \begin{equation*}
    Q_{\transposition{a}{b}}=Q_{\transposition{a}{c}\transposition{b}{d}\transposition{a}{b}\transposition{b}{d}\transposition{a}{c}}=Q_{\transposition{c}{d}}.
  \end{equation*}
  The fact that 4) implies 5) was shown in \cref{proposition:Q-deloops-sign}, and the fact that 5) implies 4) is immediate.
\end{proof}

Using the existence of a delooping of the sign homomorphism we immediately obtain the following corollary.

\begin{corollary}\label{corollary:is-contractible-type-of-deloopings-sign-homomorphism}
  The type of pointed connected maps $\BS_n\to_\ast BS_2$ is contractible.
\end{corollary}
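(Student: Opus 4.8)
The plan is to realize the type of pointed connected maps $\BS_n\to_\ast\BS_2$ as a fiber of an equivalence, namely the equivalence sending a pointed map to the group homomorphism it induces on loop spaces. The first step is to observe that every one of the five conditions of \cref{theorem:recognition-delooping-sign-homomorphism} is a proposition: connectedness of a type or of a map is a property, surjectivity is a property, the swap condition is a property, and commutativity of the square in condition~5 is a property because each of the types $S_n$, $S_2$, $\loopspace\BS_n$, $\loopspace\BS_2$ occurring in it is a set. Hence, by \cref{theorem:recognition-delooping-sign-homomorphism}, for a pointed map $Q:\BS_n\to_\ast\BS_2$ the assertion that $Q$ is connected is equivalent to the assertion that the square in condition~5 commutes; and since the vertical maps of that square are the isomorphisms $S_n\cong\loopspace\BS_n$ and $S_2\cong\loopspace\BS_2$, this is in turn equivalent to the proposition that the group homomorphism $S_n\to S_2$ induced by $\loopspace Q$ equals $\sign$ (equality of homomorphisms being the same as equality of underlying functions, since $S_2$ is a set). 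Therefore the type of pointed connected maps $\BS_n\to_\ast\BS_2$ is equivalent to the fiber over $\sign$ of the map $Q\mapsto\loopspace Q$ from $\BS_n\to_\ast\BS_2$ to $\hom(S_n,S_2)$.

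The second step is to invoke that the Rezk completion functor is an equivalence from the category of groups to the category of pointed connected $1$-types (\cref{sec:rezk}); its inverse is the loop space functor. Full faithfulness of the loop space functor gives that $Q\mapsto\loopspace Q$ is an equivalence from $\BS_n\to_\ast\BS_2$ to $\hom(\loopspace\BS_n,\loopspace\BS_2)$, and composing with the isomorphisms $S_n\cong\loopspace\BS_n$ and $S_2\cong\loopspace\BS_2$ we obtain an equivalence $(\BS_n\to_\ast\BS_2)\simeq\hom(S_n,S_2)$ carrying $Q$ to the induced homomorphism considered above; in particular the type of pointed maps $\BS_n\to_\ast\BS_2$ is a set. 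Since every fiber of an equivalence is contractible, the fiber over $\sign$---hence the type of pointed connected maps $\BS_n\to_\ast\BS_2$---is contractible. Equivalently, one may package this as follows: a delooping of $\sign$ exists by the Rezk construction of \cref{sec:rezk}, so by \cref{theorem:recognition-delooping-sign-homomorphism} the type of pointed connected maps is inhabited, and it is a proposition since any two of its elements induce $\sign$ on loop spaces and hence agree by full faithfulness; an inhabited proposition is contractible.

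There is no substantial obstacle here---the real work has already been done in \cref{theorem:recognition-delooping-sign-homomorphism}. The only points that need care are checking that condition~5 (and hence connectedness) is a proposition, so that the condition may be moved inside the $\Sigma$-type without changing the type up to equivalence, and the bookkeeping when transferring the equivalence of categories into the statement that $Q\mapsto\loopspace Q$ is an equivalence onto $\hom(S_n,S_2)$---in particular the fact, implicit in that equivalence, that pointed maps between pointed connected $1$-types form a set.
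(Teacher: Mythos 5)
Your proof is correct and follows essentially the same route as the paper, which gives no explicit argument beyond noting that the corollary follows from the existence of a delooping of the sign homomorphism: implicitly, \cref{theorem:recognition-delooping-sign-homomorphism} identifies the pointed connected maps with the deloopings of $\sign$, which form a contractible type by the equivalence between groups and pointed connected $1$-types together with the existence of a delooping via the Rezk completion. You have merely spelled out the details (propositionality of condition~5, full faithfulness of $\loopspace$) that the paper leaves implicit.
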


One way to construct a delooping of the sign homomorphism, is to construct a pointed map $Q:\BS_n\to_\ast \BS_2$ as a family of quotient types, starting from a type family $D:\BS_n\to\UU$.

\begin{corollary}\label{cor:quotient-construction}
  Consider a type family $D:\BS_n\to\UU$ equipped with a point $d:D_{\Fin{n}}$, and consider a family of equivalence relations $R_X$ on $D_X$ indexed by $X:\BS_n$ equipped with a pointed equivalence
  \begin{equation*}
    \varphi:D_{\Fin{n}}/R_{\Fin{n}}\simeq_\ast \Fin{2}.
  \end{equation*}
  If we have $\neg R_{\Fin{n}}(d,D_{\transposition{i}{j}}(d))$ for each transposition $\transposition{i}{j}$ on $\Fin{n}$, then the pointed map
  \begin{equation*}
    X\mapsto D_X/R_X:\BS_n\to_\ast\BS_2
  \end{equation*}
  is a delooping of the sign homomorphism.
\end{corollary}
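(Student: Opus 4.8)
The plan is to set $Q_X\defeq D_X/R_X$ and to verify condition~4) of \cref{theorem:recognition-delooping-sign-homomorphism}, from which condition~5) --- that $Q$ deloops the sign homomorphism --- follows at once. First I would check that $X\mapsto D_X/R_X$ actually defines a pointed map $\BS_n\to_\ast\BS_2$. For each $X:\BS_n$ we must inhabit the proposition $\brck{\Fin{2}\simeq D_X/R_X}$; since $\BS_n$ is connected we may assume a path $\Fin{n}=X$, and then transport along it, composed with $\varphi^{-1}$, yields the required equivalence $\Fin{2}\simeq D_X/R_X$. The resulting map $Q:\BS_n\to\BS_2$ is pointed by $\varphi$ itself, where $D_{\Fin{n}}/R_{\Fin{n}}$ carries the base point $[d]$.

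The key step is to compute the action on equivalences $e\mapsto Q_e$ of the family $X\mapsto D_X/R_X$. Functoriality of the set quotient shows that every equivalence $e:X\simeq Y$ induces an equivalence $D_X/R_X\simeq D_Y/R_Y$ out of $D_e:D_X\simeq D_Y$; for this one uses the compatibility $R_X(a,b)\Leftrightarrow R_Y(D_e(a),D_e(b))$, which is an instance of the naturality square recorded in \cref{sec:preliminary} applied to the family $D$ together with the relation $R$, proven by equivalence induction from $D_\id=\id$. Since this assignment sends $\id$ to $\id$, uniqueness of the action on equivalences identifies it with $e\mapsto Q_e$; in particular $Q_e$ sends the class $[a]$ of an element $a:D_X$ to the class $[D_e(a)]$.

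Finally I would verify condition~4). For any transposition $\transposition{i}{j}$ on $\Fin{n}$ we have $Q_{\transposition{i}{j}}([d])=[D_{\transposition{i}{j}}(d)]$, and by effectivity of the set quotient by the equivalence relation $R_{\Fin{n}}$ an identification $[d]=[D_{\transposition{i}{j}}(d)]$ would force $R_{\Fin{n}}(d,D_{\transposition{i}{j}}(d))$ to hold, contrary to hypothesis. Hence $Q_{\transposition{i}{j}}$ does not fix $[d]$; being a non-identity self-equivalence of the $2$-element type $Q_{\Fin{n}}$, it must be the swap. This establishes condition~4), so the corollary follows from \cref{theorem:recognition-delooping-sign-homomorphism}. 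I expect the second paragraph to be the main obstacle: matching the abstractly characterised action $e\mapsto Q_e$ with the concrete quotient map induced by $D_e$, and carefully establishing that $D_e$ respects both $R_X$ and $R_Y$; the remainder is a short computation with the universal property of the quotient.
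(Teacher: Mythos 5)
Your proposal is correct and follows essentially the same route as the paper: both reduce to condition~4) of \cref{theorem:recognition-delooping-sign-homomorphism}, identify $Q_e$ on the quotient with the map induced by $D_e$ via the naturality square from \cref{sec:preliminary} (equivalence induction from $D_\id=\id$), and use effectivity of the quotient by the equivalence relation $R_{\Fin{n}}$ to turn $\neg R_{\Fin{n}}(d,D_{\transposition{i}{j}}(d))$ into $[d]\neq Q_{\transposition{i}{j}}([d])$.
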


\begin{proof}
  First, we note that the type $D_X/R_X$ is indeed a $2$-element type for each $X:\BS_n$, since being a $2$-element type is a property and we have the equivalence $\varphi$. Therefore we have a pointed map
  \begin{equation*}
    Q:\BS_n\to_\ast \BS_2
  \end{equation*}
  given by $Q_X\defeq D_X/R_X$. By the condition that $d$ and $D_{\transposition{i}{j}}(d)$ are unrelated by $R_{\Fin{n}}$, it follows that $q(d)\neq q(D_{\transposition{i}{j}}(d))$ for each transposition $\transposition{i}{j}$ on $\Fin{n}$, where $q$ is the quotient map. Note that the square
  \begin{equation*}
    \begin{tikzcd}
      D_X \arrow[r,"D_e"] \arrow[d,swap,"q"] & D_Y \arrow[d,"q"] \\
      Q_X \arrow[r,swap,"Q_e"] & Q_Y
    \end{tikzcd}
  \end{equation*}
  commutes for every equivalence $e:X\simeq Y$. Therefore it follows that $q(d)\neq Q_{\transposition{i}{j}}(q(d))$ for each transposition $\transposition{i}{j}$ on $\Fin{n}$. This implies that $Q$ is a delooping of the sign homomorphism.
\end{proof}

\section{Delooping the sign homomorphism with fixed points}\label{sec:delooping-sign-fixed-points}

\begin{definition}
  Consider an $n$-element set $X$. Then $S_n$ acts on the set
  \begin{equation*}
    A_X\defeq (\Fin{n}\simeq X)\to S_2
  \end{equation*}
  by the action $(\alpha\cdot f)(h) \defeq \sign(\alpha)\circ f(h\circ\alpha^{-1})$. Furthermore, define
  \begin{equation*}
    Q_X\defeq \mathsf{fix}(A_X)
  \end{equation*}
  to be the type of fixed points of the $S_n$ action on $A_X$.
\end{definition}

\begin{proposition}
  There is a pointed equivalence
  \begin{equation*}
    Q_{\Fin{n}}\simeq_\ast \Fin{2}.
  \end{equation*}
  In particular, the type $Q_X$ has two elements for each $X:\BS_n$.
\end{proposition}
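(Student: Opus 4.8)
The plan is to use that, when $X=\Fin{n}$, the indexing type $\Fin{n}\simeq X$ of the function type $A_X=(\Fin{n}\simeq X)\to S_2$ is \emph{inhabited}, namely by $\id$. First I would unfold the fixed-point condition: a function $f:(\Fin{n}\simeq\Fin{n})\to S_2$ lies in $Q_{\Fin{n}}=\mathsf{fix}(A_{\Fin{n}})$ exactly when $\sign(\alpha)\circ f(h\circ\alpha^{-1})=f(h)$ for all $\alpha,h:S_n$. Reparametrizing $h$ by $h\circ\alpha$, this is equivalent to $f(h\circ\alpha)=\sign(\alpha)\circ f(h)$ for all $\alpha,h$, and specializing to $h=\id$ gives $f(\alpha)=\sign(\alpha)\circ c$ with $c\defeq f(\id)$. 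Hence a fixed point is completely determined by its value at $\id$.

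Conversely, I would check that for every $c:S_2$ the function $f_c\defeq(h\mapsto\sign(h)\circ c)$ is a fixed point. This is the one place where the specific group $S_2$ is used: since $S_2$ is abelian (it is the cyclic group of order two), we have $\sign(\alpha)\circ\sign(h\circ\alpha^{-1})\circ c=\sign(\alpha)\circ\sign(h)\circ\sign(\alpha)^{-1}\circ c=\sign(h)\circ c$, so $\alpha\cdot f_c=f_c$. As $f_c(\id)=c$, the assignment $c\mapsto f_c$ is a two-sided inverse to evaluation at $\id$, so evaluation at $\id$ is an equivalence $Q_{\Fin{n}}\simeq S_2$. Composing with the equivalence $\evzero:(\Fin{2}\simeq\Fin{2})\to\Fin{2}$ recalled in \cref{sec:finite-types} yields $Q_{\Fin{n}}\simeq\Fin{2}$. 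For the pointing I would take the base point of $Q_{\Fin{n}}$ to be $\sign$ itself, regarded as the element $h\mapsto\sign(h)$ of $A_{\Fin{n}}$ — it equals $f_c$ for $c=\id_{\Fin{2}}$, hence is a fixed point — and observe that the two displayed equivalences carry it to $\id_{\Fin{2}}$ and then to $\evzero(\id_{\Fin{2}})$, which we take as the base point of $\Fin{2}$.

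For the closing claim I would argue by connectedness rather than repeat the computation, since evaluation at $\id$ is only available at $X=\Fin{n}$: the type $A_X$ is a set, being a function type into the set $S_2$, and $Q_X$ is the subtype of $A_X$ on the fixed points, hence also a set; so ``$Q_X$ has two elements'' is the proposition $\brck{\Fin{2}\simeq Q_X}$, which holds at $X=\Fin{n}$ by the equivalence just built, and therefore holds for all $X:\BS_n$ because $\BS_n$ is connected. I do not expect a genuine obstacle here; the only care needed is to keep the group operation of $S_2$ aligned with composition of equivalences and to invoke commutativity in the converse direction.
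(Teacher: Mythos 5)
Your proof is correct and takes essentially the same route as the paper: evaluation at the identity shows every fixed point has the form $\alpha\mapsto\sign(\alpha)\circ(\pm 1)$, so the fixed points are exactly $\pm\sign$, and the ``two elements'' claim for general $X:\BS_n$ is a proposition transported along the connectedness of $\BS_n$. You are in fact slightly more thorough than the paper's own proof, which omits the converse check that $f_c$ is indeed a fixed point (your appeal to the commutativity of $S_2$ is exactly the point needed there).
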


\begin{proof}
  Suppose $f:A_{\Fin{n}}$ is a fixed point of the $S_n$-action on $A_{\Fin{n}}$. Then it follows that
  \begin{equation*}
    f(\alpha)=\sign(\alpha)\circ f(\alpha\circ\alpha^{-1})=\sign(\alpha)\circ f(1).
  \end{equation*}
  This shows that all the fixed points are of the form $\alpha\mapsto \sign(\alpha)\circ (\pm 1)$. In other words, the fixed points are $\pm\sign$. We choose $\sign$ to be the base point of $Q_{\Fin{n}}$.
\end{proof}

\begin{theorem}
  The pointed map $Q:\BS_n\to_\ast\BS_2$ is a delooping of the sign homomorphism.
\end{theorem}

\begin{proof}
  By \cref{theorem:recognition-delooping-sign-homomorphism} it suffices to check that $Q_{\transposition{i}{j}}$ swaps the elements of $Q_{\Fin{n}}$ for each transposition $\transposition{i}{j}$ on $\Fin{n}$.
  
  First note that the action on equivalences $e\mapsto A_e$ is given by $A_e(f)(h)\defeq f(e^{-1}\circ h)$. One can easily see that this action on equivalences preserves fixed points since $A_{\id}$ preserves fixed points, and that its restriction to $Q$ is the action on equivalences of $Q$. It follows that
  \begin{equation*}
    Q_{\transposition{i}{j}}(\pm\sign)(\alpha)=\pm\sign(\transposition{i}{j}\circ\alpha)=\mp\sign(\alpha).
  \end{equation*}
  In other words, $Q_{\transposition{i}{j}}$ swaps the elements of $Q_{\Fin{n}}$.
\end{proof}

\section{Delooping the sign homomorphism with orbits}\label{sec:delooping-sign-orbits}

\begin{definition}
  Consider an $n$-element type $X$. Then $S_n$ acts on the set
  \begin{equation*}
    B_X\defeq (\Fin{n}\simeq X)\times S_2
  \end{equation*}
  by the action $\alpha\cdot(h,\sigma)\defeq (h\circ\alpha^{-1},\sign(\alpha)\circ\sigma)$. We define
  \begin{equation*}
    Q_X\defeq B_X/\sim
  \end{equation*}
  where $(h,\sigma)\sim (h',\sigma')$ if $(h,\sigma)$ and $(h',\sigma')$ are in the same orbit of $S_n$.
\end{definition}

\begin{proposition}
  There is a pointed equivalence
  \begin{equation*}
    Q_{\Fin{n}}\simeq_\ast\Fin{2}.
  \end{equation*}
  In particular, the type $Q_X$ has two elements for each $X:\BS_n$.
\end{proposition}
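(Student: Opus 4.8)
The plan is to identify the orbit set $Q_{\Fin{n}} = B_{\Fin{n}}/{\sim}$ explicitly. Note that $B_{\Fin{n}} = (\Fin{n}\simeq\Fin{n})\times S_2 = S_n\times S_2$, and record first the key computation: for any $(h,\sigma):B_{\Fin{n}}$, acting by $\alpha:=h$ gives
\begin{equation*}
  h\cdot(h,\sigma) = (h\circ h^{-1},\ \sign(h)\circ\sigma) = (\id,\ \sign(h)\circ\sigma),
\end{equation*}
so $(h,\sigma)$ and $(\id,\sign(h)\circ\sigma)$ always lie in the same orbit. This suggests that the map $s:S_2\to Q_{\Fin{n}}$ defined by $s(\sigma):=[(\id,\sigma)]$ is an equivalence.

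To produce its inverse I would define $\Phi:B_{\Fin{n}}\to S_2$ by $\Phi(h,\sigma):=\sign(h)\circ\sigma$. Since $\sign$ is a group homomorphism, $\Phi$ is constant on orbits:
\begin{equation*}
  \Phi(\alpha\cdot(h,\sigma)) = \sign(h\circ\alpha^{-1})\circ\sign(\alpha)\circ\sigma = \sign(h)\circ\sign(\alpha^{-1})\circ\sign(\alpha)\circ\sigma = \sign(h)\circ\sigma,
\end{equation*}
so $\Phi$ descends to a map $\bar\Phi:Q_{\Fin{n}}\to S_2$. Then $\bar\Phi(s(\sigma)) = \sign(\id)\circ\sigma = \sigma$, and $s(\bar\Phi[(h,\sigma)]) = [(\id,\sign(h)\circ\sigma)] = [(h,\sigma)]$ by the orbit computation above, so $s$ and $\bar\Phi$ are mutually inverse equivalences $S_2\simeq Q_{\Fin{n}}$.

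Composing $\bar\Phi$ with the evaluation equivalence $\evzero:(\Fin{2}\simeq\Fin{2})\to\Fin{2}$, $e\mapsto e(0)$, yields an equivalence $Q_{\Fin{n}}\simeq\Fin{2}$. Taking $[(\id,\id)]$ and its image $\evzero(\id)$ as the respective base points of $Q_{\Fin{n}}$ and $\Fin{2}$, this equivalence is pointed, which establishes $Q_{\Fin{n}}\simeq_\ast\Fin{2}$. The ``in particular'' clause then follows by the argument already used in the proof of \cref{cor:quotient-construction}: being a $2$-element type is a proposition, $\BS_n$ is connected, and this property holds at the base point $\Fin{n}$, so $Q_X$ is a $2$-element type for every $X:\BS_n$.

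I do not anticipate a genuine obstacle. The only computations are the orbit-invariance of $\Phi$ and the two round-trip identities, each a one-line manipulation using only that $\sign$ is a homomorphism (one may additionally observe that $S_2$ is abelian and that every element of $S_2$ is self-inverse, though neither is needed). The one point to be careful about is the variance of the $S_n$-action --- in particular that it is free in the first coordinate, which is exactly what lets us reduce every orbit to one meeting $\{\id\}\times S_2$.
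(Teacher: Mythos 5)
Your proof is correct and follows the same route as the paper, whose entire argument is the observation that $(h,\sigma)\sim(\id,\sign(h)\circ\sigma)$, so that there are exactly the two orbits of $(\id,1)$ and $(\id,-1)$. You merely make explicit what the paper leaves implicit, namely the orbit invariant $\Phi(h,\sigma)=\sign(h)\circ\sigma$ that separates the two orbits and provides the inverse to $\sigma\mapsto[(\id,\sigma)]$.
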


\begin{proof}
  Note that
  \begin{equation*}
    (h,\sigma)\sim(1,\sign(h)\circ\sigma)
  \end{equation*}
  for any $(h,\sigma):B_{\Fin{2}}$. This shows that there are precisely two orbits: the orbit of $(1,1)$ and the orbit of $(1,-1)$.
\end{proof}

\begin{theorem}
  The pointed map $Q:\BS_n\to_\ast\BS_2$ is a delooping of the sign homomorphism.
\end{theorem}

\begin{proof}
    By \cref{theorem:recognition-delooping-sign-homomorphism} it suffices to check that $Q_{\transposition{i}{j}}$ swaps the elements of $Q_{\Fin{n}}$ for each transposition $\transposition{i}{j}$ on $\Fin{n}$.
  
  First note that the action on equivalences $e\mapsto B_e$ is given by $B_e(h,\sigma))\defeq (e\circ h,\sigma)$. One can easily see that this action on equivalences preserves orbits since $A_{\id}$ preserves orbits, and that its quotient to $Q$ is the action on equivalences of $Q$. It follows that
  \begin{equation*}
    Q_{\transposition{i}{j}}([1,\pm 1]_{\sim})=[\transposition{i}{j},\pm 1]_{\sim}=[1,\mp 1]_{\sim}.
  \end{equation*}
  In other words, $Q_{\transposition{i}{j}}$ swaps the elements of $Q_{\Fin{n}}$.
\end{proof}

\section{Simpson's delooping of the sign homomorphism}\label{sec:simpson}

In this section we will use \cref{cor:quotient-construction} to define a delooping of the sign homomorphism based on an suggestion of Alex Simpson. This is perhaps the simplest way to describe the delooping of the sign homomorphism. We will fix a natural number $n\geq 2$.

\begin{definition}
  We define the type family $E:\BS_n\to \UU$ by
  \begin{equation*}
    E_X := \Fin{n} \simeq X.
  \end{equation*}
  The canonical element $d:E_{\Fin{n}}$ is defined by $d\defeq\id$.
\end{definition}

The action of $E$ on equivalences is described by
\begin{equation*}
  E_e(f)\defeq e\circ f
\end{equation*}
for any equivalence $e:X\simeq Y$. 

\begin{definition}
  For any $X:\BS_n$ we define a decidable relation $R_X:E_X\to E_X\to\dProp$ by
  \begin{equation*}
    R_X(f,g):= \big(\sign(g^{-1} \circ f)= 1\big)
  \end{equation*}
\end{definition}

\begin{lemma}
  The relation $R_X$ is an equivalence relation on $E_X$, for any $X:\BS_n$.
\end{lemma}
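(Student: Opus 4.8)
The plan is to verify the three defining properties of an equivalence relation for $R_X(f,g) := (\sign(g^{-1}\circ f) = 1)$ directly, using only that $\sign$ is a group homomorphism $S_n \to S_2$ and that $S_2$ has exactly two elements $1$ and $-1$ with $1$ the identity. Throughout I would work with the group $S_X \defeq (X \simeq X)$, noting that $\sign$ transports along the isomorphism $S_X \cong S_n$ induced by any element of $E_X$, but in fact it is cleaner to observe that $g^{-1}\circ f : \Fin{n}\simeq\Fin{n}$ lives in $S_n$ on the nose, so no transport is needed: for $f,g : E_X = (\Fin{n}\simeq X)$ the composite $g^{-1}\circ f$ is an honest element of $S_n$.

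First I would check reflexivity: for any $f : E_X$ we have $f^{-1}\circ f = \id$, and since $\sign$ is a homomorphism, $\sign(\id) = 1$, so $R_X(f,f)$ holds. Next, symmetry: assume $\sign(g^{-1}\circ f) = 1$. Then $(g^{-1}\circ f)^{-1} = f^{-1}\circ g$, and since $\sign$ is a homomorphism it preserves inverses, so $\sign(f^{-1}\circ g) = \sign(g^{-1}\circ f)^{-1} = 1^{-1} = 1$, giving $R_X(g,f)$. Finally, transitivity: assume $\sign(g^{-1}\circ f) = 1$ and $\sign(h^{-1}\circ g) = 1$. Then $h^{-1}\circ f = (h^{-1}\circ g)\circ(g^{-1}\circ f)$, and applying the homomorphism property, $\sign(h^{-1}\circ f) = \sign(h^{-1}\circ g)\circ\sign(g^{-1}\circ f) = 1\circ 1 = 1$, so $R_X(f,h)$. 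Each relation is valued in $\dProp$ by construction, so no further work is needed on that front.

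There is essentially no hard part here; the only thing to be slightly careful about is invoking the right form of the homomorphism laws for $\sign$ (preservation of identity, of composition, and hence of inverses), which the paper has already established by formalizing the proof from \cite{rotman2012introduction}. I would present the proof compactly as the three bullet-free displayed computations above, remarking once at the start that $R_X$ is the pullback of the trivial subgroup-membership relation on $S_2$ along the map $E_X \times E_X \to S_2$, $(f,g)\mapsto\sign(g^{-1}\circ f)$, which makes the equivalence-relation axioms automatic; but since the paper's style seems to favor explicit verification, I would just write out the three short checks.
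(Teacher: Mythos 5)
Your proof is correct and follows essentially the same route as the paper's: reflexivity from $\sign(\id)=1$, symmetry from $\sign$ preserving inverses, and transitivity from $\sign$ preserving composition, using the factorization $h^{-1}\circ f = (h^{-1}\circ g)\circ(g^{-1}\circ f)$. The extra remarks about transport and the pullback viewpoint are fine but not needed; the paper simply writes out the three computations as you do.
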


\begin{proof}
  Reflexivity of $R_X$ follows from
  \begin{equation*}
    \sign(f^{-1}\circ f)=\sign(\id)=1.
  \end{equation*}
  To see that $R_X$ is symmetric, assume that $\sign(g^{-1}\circ f)=1$. Then we have
  \begin{equation*}
    \sign(f^{-1}\circ g)=\sign(g^{-1}\circ f)^{-1}=1.
  \end{equation*}
  To see that $R_X$ is transitive, assume that $\sign(h^{-1}\circ g)=1$ and that $\sign(g^{-1}\circ f)=1$. Then we have
  \begin{equation*}
    \sign(h^{-1}\circ f)=\sign(h^{-1}\circ g)\circ\sign(g^{-1}\circ f)=1.\qedhere
  \end{equation*}
\end{proof}

\begin{lemma}\label{atleasttwoecsimpson}
  For any transposition $\transposition{i}{j}$ on $\Fin{n}$ we have
  \begin{equation*}
    \neg R_{\Fin{n}}(d,E_{\transposition{i}{j}}(d)).
  \end{equation*}
\end{lemma}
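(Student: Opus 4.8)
The plan is to reduce the claim to the single computation $\sign(\transposition{i}{j})=-1$. First I would unfold the definitions involved: since $E_X\defeq\Fin{n}\simeq X$ and $d\defeq\id$, the element $E_{\transposition{i}{j}}(d)$ is, by the formula $E_e(f)\defeq e\circ f$ for the action of $E$ on equivalences, just $\transposition{i}{j}\circ\id=\transposition{i}{j}$ regarded as an element of $S_n$. So the proposition $R_{\Fin{n}}(d,E_{\transposition{i}{j}}(d))$ unfolds to $\big(\sign(\transposition{i}{j}^{-1}\circ\id)=1\big)$.

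Next I would use that a transposition is its own inverse, so $\transposition{i}{j}^{-1}\circ\id=\transposition{i}{j}$, whence $R_{\Fin{n}}(d,E_{\transposition{i}{j}}(d))$ is equivalent to $\big(\sign(\transposition{i}{j})=1\big)$. By the defining property of the sign homomorphism recalled in the introduction, $\sign$ sends every transposition to the non-identity element of $S_2$, i.e.\ $\sign(\transposition{i}{j})=-1$. Since $-1\neq 1$ in $S_2$, we conclude $\neg R_{\Fin{n}}(d,E_{\transposition{i}{j}}(d))$.

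There is no real obstacle here: the only input beyond unfolding definitions is the basic fact that transpositions have sign $-1$, which is part of the characterization of $\sign$ already available to us, together with $-1\neq 1$ in $S_2$ (the two elements of $S_2$ are distinct because $S_2$ is a $2$-element type). The one point worth stating carefully is the bookkeeping identification $E_{\transposition{i}{j}}(d)=\transposition{i}{j}$, which follows immediately from the description $E_e(f)=e\circ f$ of the action of $E$ on equivalences together with $d=\id$.
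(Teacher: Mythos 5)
Your proof is correct and takes essentially the same route as the paper's: unfold $E_{\transposition{i}{j}}(d)=\transposition{i}{j}$ and $R_{\Fin{n}}$, reduce to $\sign(\transposition{i}{j})=-1\neq 1$. You make explicit the small step that $\transposition{i}{j}^{-1}=\transposition{i}{j}$, which the paper leaves implicit; otherwise the arguments coincide.
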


\begin{proof}
  By the definition of the sign homomorphism we have $\sign\transposition{i}{j}=-1$. Therefore it follows that
  \begin{equation*}
    \sign(E_{\transposition{i}{j}}(d)^{-1}\circ d)=\sign\transposition{i}{j}=-1\neq 1.\qedhere
  \end{equation*}
\end{proof}

\begin{lemma}\label{equivalencefin2simpson}
  There is a pointed equivalence
  \begin{equation*}
    \varphi:E_{\Fin{n}}/R_{\Fin{n}}\simeq_\ast \Fin{2}.
  \end{equation*}
\end{lemma}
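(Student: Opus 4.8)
The plan is to descend the sign homomorphism to the quotient and then identify the result with $\Fin{2}$ via the evaluation equivalence from the corollary at the end of \cref{sec:finite-types}.

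First I would define a map $\overline{\sign}:E_{\Fin{n}}/R_{\Fin{n}}\to S_2$ by $[f]\mapsto\sign(f)$. By the universal property of the set quotient, this is well-defined as soon as $\sign$ respects $R_{\Fin{n}}$, which it does: if $R_{\Fin{n}}(f,g)$ holds, i.e.\ $\sign(g^{-1}\circ f)=1$, then, since $\sign$ is a group homomorphism, $\sign(f)=\sign(g)\circ\sign(g^{-1}\circ f)=\sign(g)$.

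Next I would check that $\overline{\sign}$ is a bijection of sets. It is surjective because $\sign$ itself is: $\sign(\id)=1$, and since $n\geq 2$ the transposition $\transposition{0}{1}$ on $\Fin{n}$ satisfies $\sign\transposition{0}{1}=-1$, so both elements of $S_2$ lie in the image. It is injective because $\sign(f)=\sign(g)$ implies $\sign(g^{-1}\circ f)=\sign(g)^{-1}\circ\sign(f)=1$, hence $R_{\Fin{n}}(f,g)$, hence $[f]=[g]$ in the quotient. As $E_{\Fin{n}}/R_{\Fin{n}}$ and $S_2$ are both sets, a bijection between them is an equivalence.

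Finally I would set $\varphi\defeq\evzero\circ\overline{\sign}$, where $\evzero:S_2=(\Fin{2}\simeq\Fin{2})\to\Fin{2}$ is the evaluation equivalence; then $\varphi$ is an equivalence, being a composite of equivalences. It is pointed because the base point of $E_{\Fin{n}}/R_{\Fin{n}}$ is $[d]=[\id]$ and $\varphi([\id])=\evzero(\sign(\id))=\evzero(1)=1(0)=0$, the base point of $\Fin{2}$. The whole argument is routine; the only steps requiring care are the correct appeal to the universal property of the set quotient for the well-definedness of $\overline{\sign}$, and tracking the base points through the composite $\evzero\circ\overline{\sign}$.
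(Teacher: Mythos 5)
Your proof is correct, but it takes a different route from the paper's. The paper argues abstractly that the quotient has exactly two equivalence classes (at least two by \cref{atleasttwoecsimpson}, at most two by observing that for any $f,f',g$ with $\neg R_{\Fin{n}}(f,f')$, the product $\sign(f'^{-1}\circ g)\circ\sign(g^{-1}\circ f)=-1$ forces $g$ into one of the two classes), and then implicitly invokes the contractibility of the type of pointed $2$-element types to obtain the pointed equivalence. You instead construct the equivalence explicitly by descending $\sign$ through the quotient to a map $\overline{\sign}:E_{\Fin{n}}/R_{\Fin{n}}\to S_2$, check that it is a bijection, and post-compose with $\evzero$. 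Your version makes the equivalence and its pointedness concrete and traceable, which is arguably clearer and closer to what one would actually write in a proof assistant; the paper's version is shorter on the page but leans on the characterization of $2$-element types to hide the bookkeeping. One small wrinkle: when you show well-definedness and injectivity of $\overline{\sign}$ you use that $\sign$ is a group homomorphism; the paper uses this same fact in its ``at most two classes'' step, so neither proof avoids it, but it is worth noting that this dependency makes the Simpson construction genuinely reliant on $\sign$ (in contrast to the later Cartier construction).
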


\begin{proof}
  It suffices to show that $R_{\Fin{n}}$ has exactly two equivalence classes. It follows from \cref{atleasttwoecsimpson} that there are at least two equivalence classes, so it remains to show that there are at most two equivalence classes. Consider $f,f':E_{\Fin{n}}$ such that $\neg R_{\Fin{n}}(f,f')$, and consider $g:E_{\Fin{n}}$. Then it follows that
  \begin{equation*}
    \sign(f'^{-1}\circ g)\circ\sign(g^{-1}\circ f)=\sign(f'^{-1}\circ f)=-1.
  \end{equation*}
  This implies that $\sign(f'^{-1}\circ g)$ and $\sign(g^{-1}\circ f)$ can't both be $-1$, so one of them is $1$.
\end{proof}

\begin{theorem}\label{theorem:simpson}
  The \define{Simpson map} $Q:\BS_n\to_\ast \BS_2$ given by
  \begin{equation*}
    Q_X\defeq E_X/R_X
  \end{equation*}
  is a delooping of the sign homomorphism.
\end{theorem}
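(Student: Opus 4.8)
The plan is to derive this directly from \cref{cor:quotient-construction}, instantiated at the type family $D\defeq E$. Everything needed to apply that corollary has already been prepared in the lemmas above, so the proof is essentially a matter of matching up the data and checking that the hypotheses hold.

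Concretely, I would proceed as follows. The point $d\defeq\id:E_{\Fin{n}}$ is fixed in the definition of $E$. The lemma asserting that $R_X$ is an equivalence relation on $E_X$ shows that the family $R$ is of the kind required by \cref{cor:quotient-construction}; moreover $R_X$ is $\dProp$-valued, since equality in the discrete set $S_2$ is decidable, so the relation is genuinely an equivalence relation in the propositional sense. Then \cref{equivalencefin2simpson} supplies the pointed equivalence $\varphi:E_{\Fin{n}}/R_{\Fin{n}}\simeq_\ast\Fin{2}$, and \cref{atleasttwoecsimpson} supplies the hypothesis $\neg R_{\Fin{n}}(d,E_{\transposition{i}{j}}(d))$ for every transposition $\transposition{i}{j}$ on $\Fin{n}$. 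Feeding this data into \cref{cor:quotient-construction} yields precisely the statement that the pointed map $X\mapsto E_X/R_X$ is a delooping of the sign homomorphism.

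There is essentially no remaining obstacle: the substance of the argument lives in \cref{equivalencefin2simpson}, whose proof already uses that $\sign$ is multiplicative and takes values in $\{1,-1\}$ to conclude that $R_{\Fin{n}}$ has exactly two equivalence classes. The one point worth keeping in mind is that \cref{cor:quotient-construction} internally relies on the quotient map being compatible with the action on equivalences, i.e.\ on the commuting square $q\circ E_e = Q_e\circ q$; here this is immediate from the formula $E_e(f) = e\circ f$ together with the fact that $R$ is defined uniformly over the connected component $\BS_n$, so no additional work is needed.
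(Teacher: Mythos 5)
Your proposal is correct and is exactly the paper's own proof: instantiate \cref{cor:quotient-construction} at $D\defeq E$ with $d\defeq\id$, using \cref{atleasttwoecsimpson} for the unrelatedness hypothesis and \cref{equivalencefin2simpson} for the pointed equivalence. The additional remarks about decidability and the compatibility square are fine but not needed beyond what the corollary already packages.
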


\begin{proof}
  In \cref{atleasttwoecsimpson} we showed that
  \begin{equation*}
    \neg R_{\Fin{n}}(d,E_{\transposition{i}{j}}(d))
  \end{equation*}
  holds for every transposition $\transposition{i}{j}$ on $\Fin{n}$, and in \cref{equivalencefin2simpson} we constructed a pointed equivalence
  \begin{equation*}
    \varphi:E_{\Fin{n}}/R_{\Fin{n}}\simeq_\ast\Fin{2}.
  \end{equation*}
  Therefore the claim follows from \cref{cor:quotient-construction}.
\end{proof}

\section{Cartier's delooping of the sign homomorphism}\label{sec:cartier}

In this section we will again use \cref{cor:quotient-construction} to give an alternative definition of a delooping of the sign homomorphism based on a method of Cartier~\cite{cartier1970remarques}. Unlike the previous construction, the definition of the \emph{Cartier map}
\begin{equation*}
  C:\BS_n\to_\ast \BS_2
\end{equation*}
given in the statement of \cref{theorem:cartier-delooping} does not refer to the sign homomorphism. Of course, we still refer to the sign homomorphism in order to show that $C$ is a delooping of the sign homomorphism.

Recall that the edges of the complete undirected graph on a finite set $X$ can be described equivalently as $2$-element (decidable) subsets of $X$, where we also note that any finite subset of a finite type is automatically decidable. The main idea behind Cartier's delooping of the sign homomorphism is that there is a natural equivalence relation on the type of \emph{orientations} of the complete undirected graph on a finite set $X$, of which the quotient is a $2$-element set. The operation that sends an $n$-element set $X$ to this quotient deloops the sign homomorphism.

\begin{definition}
  We define the type $D_X$ of {\bf orientations} of the complete undirected graph on $X$ to be the type of functions which pick an element of each $2$-element decidable subset of $X$, i.e.,
  \begin{equation*}
    D_X := \prod_{P:\binomial{X}{\Fin{2}}}\sum_{x:X}P(x).
  \end{equation*}
  The canonical orientation $d:D_{\Fin{2}}$ is defined by
  \begin{equation*}
    d(P)\defeq\max(P).
  \end{equation*}
\end{definition}

The action of $D$ on equivalences is described by
\begin{equation*}
  D_e(u,P)\defeq e(u(P\circ e)).
\end{equation*}
Indeed, if $e:X\simeq Y$ is an equivalence, $u:D_X$ is an orientation of the complete undirected graph of $X$, and $P:Y\to\dProp$ is a $2$-element decidable subset of $Y$, then $u(P\circ e)$ is an element of the $2$-element decidable subset $P\circ e$ of $X$, and furthermore the equivalence $e$ restricts to an equivalence $\sum_{(x:X)}P(e(x))\simeq\sum_{(y:Y)}P(y)$. To see that $D_e$ indeed describes the action of $D$ on equivalences, we have to show that the square
\begin{equation*}
  \begin{tikzcd}
    (X=Y) \arrow[r] \arrow[d]  & (D_X=D_Y) \arrow[d] \\
    (X\simeq Y) \arrow[r] & (D_X\simeq D_Y)
  \end{tikzcd}
\end{equation*}
commutes for every identification $p:X=Y$. This is shown by path induction on $p$. Indeed it is the case that $D_\id(u,P)=u(P)$. In other words, $D_\id=\id$.

\begin{definition}
  We define a decidable relation $R_X$ on $D_X$ in two steps:
  \begin{enumerate}
  \item For any two elements $u$ and $v$ in $D_X$ we define the natural number $m(u,v)$ of \textbf{relative inversions} to be the number of $2$-element decidable subsets on which $u$ and $v$ differ.
  \item For any two elements $u$ and $v$ in $D_X$ we define
    \begin{equation*}
      R_X(u,v)\defeq (m(u,v)\equiv 0\mod 2).
    \end{equation*}
  \end{enumerate}
\end{definition}

\begin{lemma}\label{lemma:unrelated-transposition-max}
  For any transposition $\transposition{i}{j}$ on $\Fin{n}$, the orientation $D_{\transposition{i}{j}}(d)$ is not related by $R_{\Fin{n}}$ to the canonical orientation $d$.
\end{lemma}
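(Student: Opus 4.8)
The plan is to show that the number $m(d, D_{\transposition{i}{j}}(d))$ of relative inversions between the canonical orientation $d$ and its image under the transposition $\transposition{i}{j}$ is odd, since then $R_{\Fin{n}}(d, D_{\transposition{i}{j}}(d))$ fails by definition of $R_X$. So the task reduces to a combinatorial count: on how many $2$-element decidable subsets $P$ of $\Fin{n}$ do $d(P)$ and $D_{\transposition{i}{j}}(d)(P)$ differ?

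First I would unfold the definitions. By the formula for the action of $D$ on equivalences, $D_{\transposition{i}{j}}(d)(P) = \transposition{i}{j}(d(P \circ \transposition{i}{j}))$, where $P \circ \transposition{i}{j}$ is the $2$-element decidable subset obtained by pulling $P$ back along the transposition, and $d(Q) = \max(Q)$. So I want to count the $2$-element subsets $P = \{a, b\}$ (with $a < b$) for which $\transposition{i}{j}(\max\{\transposition{i}{j}(a), \transposition{i}{j}(b)\}) \neq \max\{a, b\}$. I would organize the count by how the pair $\{a,b\}$ meets $\{i,j\}$: if $\{a,b\}$ is disjoint from $\{i,j\}$, then $\transposition{i}{j}$ fixes both elements and $d$ agrees; if $\{a,b\} = \{i,j\}$, then $d$ disagrees iff $\transposition{i}{j}$ reverses the order of $\{i,j\}$, which it always does, contributing exactly one; the interesting case is when $\{a,b\}$ meets $\{i,j\}$ in exactly one element, say $\{a,b\} = \{i, k\}$ or $\{j,k\}$ with $k \notin \{i,j\}$ — here whether $d$ disagrees depends on the position of $k$ relative to $i$ and $j$.

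The main obstacle is the bookkeeping in this last case, and I expect it to come out to an even contribution, so that the total is odd (the single contribution from $\{i,j\}$ itself). Concretely, assume without loss of generality $i < j$, and for each $k \notin \{i,j\}$ consider the two subsets $\{i,k\}$ and $\{j,k\}$. I would check that for $k < i$ and for $k > j$ the pair $(\{i,k\}, \{j,k\})$ contributes $0$ to the inversion count, while for $i < k < j$ it contributes $2$; in each regime the contribution is even, hence summing over all such $k$ gives an even number. Alternatively, and perhaps more cleanly, one can phrase this as: the map $P \mapsto P \circ \transposition{i}{j}$ is an involution on $\binomial{\Fin{n}}{\Fin{2}}$, and the disagreement set of $d$ with $D_{\transposition{i}{j}}(d)$ is compatible with this involution in a way that pairs up all disagreeing subsets except the fixed point $\{i,j\}$; I would use whichever framing makes the parity argument shortest. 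Either way, the upshot is $m(d, D_{\transposition{i}{j}}(d))$ is odd, so $\neg R_{\Fin{n}}(d, D_{\transposition{i}{j}}(d))$, completing the proof.
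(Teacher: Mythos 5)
Your proposal is correct and takes essentially the same approach as the paper: both reduce to showing $m(d,D_{\transposition{i}{j}}(d))$ is odd by checking that the subsets $P$ on which the orientations disagree are paired off by $P\mapsto P\circ\transposition{i}{j}$ (your pairs $\{i,k\},\{j,k\}$ are exactly the two-element orbits of that involution, contributing evenly), with $\{i,j\}$ the unique disagreeing fixed point. Your explicit case analysis on the position of $k$ relative to $i$ and $j$ checks out, and the ``involution'' framing you mention as an alternative is precisely the one the paper adopts.
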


\begin{proof}
  Define $F$ to be the type of $2$-element decidable subsets of $\Fin{n}$ at which $d$ and $D_{\transposition{i}{j}}(d)$ differ. Our goal is to show that the cardinality of $F$ is odd.
  
  Note that the transposition $\transposition{i}{j}$ induces a $\Z/2$-action on the set of $2$-element decidable subsets of $\Fin{n}$, given by
  \begin{equation*}
    P\mapsto P\circ\transposition{i}{j}.
  \end{equation*}
  Now we observe that
  \begin{equation*}
    \big(\transposition{i}{j}(d(P\circ\transposition{i}{j}))=d(P)\big)
    \Leftrightarrow
    \big(\transposition{i}{j}(d(P))=d(P\circ\transposition{i}{j})\big)
  \end{equation*}
  for any $2$-element decidable subset $P$ of $\Fin{n}$. This implies that if $P\in F$, then the entire orbit of $P$ is contained in $F$. The orbits containing two elements contribute an even number of elements to $F$. Therefore it suffices to show that $F$ contains exactly one fixed point. The fixed points are the $2$-element subsets disjoint from $\{i,j\}$ and the subset $\{i,j\}$ itself. If $P$ is disjoint from $\{i,j\}$, then
  \begin{equation*}
    D_{\transposition{i}{j}}(d,P)=\transposition{i}{j}(d(P\circ\transposition{i}{j}))=\transposition{i}{j}(d(P))=d(P).
  \end{equation*}
  Such subsets $P$ are clearly not in $F$. On the other hand, if $P=\{i,j\}$ then we find that
  \begin{equation*}
    D_{\transposition{i}{j}}(d,P)=\transposition{i}{j}(d(P\circ\transposition{i}{j}))=\transposition{i}{j}(d(P))=\min(P).
  \end{equation*}
  Thus we see that $\{i,j\}\in F$, completing the proof.
\end{proof}

In order to show that $R_X$ is an equivalence relation, we prove the following lemma.

\begin{lemma}\label{symmetricdifference}
  For any three elements $u_1$, $u_2$ and $u_3$ of type $D_X$, we have
  \begin{equation*}
    m(u_1,u_3) \equiv m(u_1,u_2) + m(u_2,u_3) \mod 2.
  \end{equation*}
\end{lemma}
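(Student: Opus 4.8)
The plan is to reduce the mod-2 identity on $m$ to a statement about symmetric differences of subsets of the finite set $\binomial{X}{\Fin{2}}$. For each pair $u_i, u_j$ of orientations, let $D_{ij} \subseteq \binomial{X}{\Fin{2}}$ be the decidable subtype of $2$-element subsets $P$ on which $u_i$ and $u_j$ differ, so that $m(u_i, u_j)$ is the cardinality of $D_{ij}$. The key combinatorial fact is that for each individual $2$-element subset $P$, the target type $\sum_{(x:X)} P(x)$ has exactly two elements, so ``$u_i$ and $u_j$ differ at $P$'' is a genuine boolean condition $b_{ij}(P) : \mathsf{bool}$, and the three booleans $b_{12}(P), b_{23}(P), b_{13}(P)$ satisfy $b_{13}(P) = b_{12}(P) \oplus b_{23}(P)$ (XOR) because in a $2$-element type, $u_1 = u_3$ at $P$ iff $u_1,u_3$ agree-or-both-disagree with $u_2$. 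This is the heart of the argument and the one place where we must use that $P$ picks out a $2$-element subtype rather than an arbitrary one.

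Concretely, I would proceed as follows. First I would observe that $\binomial{X}{\Fin{2}}$ is a finite set (by the proposition on binomial types quoted earlier), so any decidable subtype of it has a well-defined cardinality, and that $D_{13}$ is the symmetric difference of $D_{12}$ and $D_{23}$ as decidable subtypes of $\binomial{X}{\Fin{2}}$ — this is precisely the pointwise XOR identity above, upgraded from a statement about each $P$ to a statement about subtypes via function extensionality and decidability. Second, I would invoke (or prove by a short induction on the cardinality of the ambient finite set) the elementary counting fact that for decidable subsets $A, B$ of a finite set, $|A \mathbin{\triangle} B| \equiv |A| + |B| \bmod 2$; equivalently $|A \triangle B| = |A| + |B| - 2|A \cap B|$, which is obviously even-equivalent to $|A|+|B|$. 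Combining these two steps gives $m(u_1,u_3) = |D_{13}| = |D_{12} \triangle D_{23}| \equiv |D_{12}| + |D_{23}| = m(u_1,u_2) + m(u_2,u_3) \bmod 2$, which is the claim.

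The main obstacle is the pointwise trichotomy-to-XOR step: making precise that at each $2$-element subset $P$, the three ``differ'' predicates behave like XOR. The cleanest way I see is to use the corollary that a pointed $2$-element type is contractible, hence $\sum_{(x:X)}P(x)$ being a $2$-element type means that, after choosing either $u_2(P)$ as a basepoint, every element is either equal to $u_2(P)$ or equal to its $\swap$; then $u_1(P) \neq u_3(P)$ holds iff exactly one of $u_1(P) \neq u_2(P)$, $u_3(P) \neq u_2(P)$ holds. All the predicates here are decidable since $X$ is discrete (finite), so the passage to a genuine $\mathsf{bool}$-valued XOR identity, and then to the symmetric-difference identity of decidable subtypes of the finite set $\binomial{X}{\Fin{2}}$, is routine. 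The counting lemma $|A \triangle B| \equiv |A| + |B| \bmod 2$ for decidable subsets of a finite set is standard and, if not already available, is a quick induction on cardinality; I would cite or state it without belaboring the computation.
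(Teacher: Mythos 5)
Your proof is correct and follows essentially the same route as the paper's: both identify the set of subsets on which $u_1$ and $u_3$ differ as the symmetric difference of the corresponding sets for $(u_1,u_2)$ and $(u_2,u_3)$, using the $2$-element nature of each $\sum_{(x:X)}P(x)$ to get the pointwise XOR identity, and then conclude via the standard parity fact $\Card{A\mathbin{\triangle}B}=\Card{A}+\Card{B}-2\Card{A\cap B}$.
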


\begin{proof}
  For each $i,j\in\{1,2,3\}$ we define
  \begin{equation*}
    F_{i,j} := \{P\mid u_i(P) \neq u_j(P)\}\subseteq {\textstyle \binomial{X}{\Fin{2}}}.
  \end{equation*}
  Then $m(u_i,u_j)$ is the cardinality of $F_{i,j}$. Furthermore, let $G := F_{1,2} \cap F_{2,3}$. We claim that
  \begin{equation*}
    F_{1,3}=(F_{1,2}\setminus F_{2,3})\cup (F_{2,3}\setminus F_{1,2})=(F_{1,2}\cup F_{2,3})\setminus G.
  \end{equation*}
  \begin{equation*}
  \begin{tikzpicture}
    \def\radius{2cm}
    \def\mycolorbox#1{\textcolor{#1}{\rule{2ex}{2ex}}}
    \colorlet{colori}{blue!70}
    \colorlet{colorii}{red!70}

    \coordinate (ceni);
    \coordinate[xshift=\radius] (cenii);

    \draw[fill=colori,fill opacity=0.5] (ceni) circle (\radius);
    \draw[fill=colori,fill opacity=0.5] (cenii) circle (\radius);

    \begin{scope}[white, opacity=1]
      \clip (ceni) circle (\radius);
      \fill (cenii) circle (\radius);
    \end{scope}

    \begin{scope}[colorii, opacity=0.5]
      \clip (ceni) circle (\radius);
      \fill (cenii) circle (\radius);
    \end{scope}

    \draw (ceni) circle (\radius);
    \draw (cenii) circle (\radius);

    \draw  ([xshift=-6pt,yshift=6pt]current bounding box.north west)
      rectangle ([xshift=6pt,yshift=-6pt]current bounding box.south east);

    \node[yshift=10pt] at (current bounding box.north) {$2$-element decidable subsets of $X$};

    \node at ([xshift=0.44*\radius]current bounding box.east)
    {
    \begin{tabular}{ll}
      \mycolorbox{colori!50} & $F_{1,3}$  \\
      \mycolorbox{colorii!50} & $G$
    \end{tabular}
    };

    \node[xshift=-.5\radius] at (ceni) {$F_{1,2}$};
    \node[xshift=.5\radius] at (cenii) {$F_{2,3}$};
  \end{tikzpicture}
  \end{equation*}

  To see that $F_{1,3}$ is indeed the symmetric difference of $F_{1,2}$ and $F_{2,3}$, let $P$ be a $2$-element decidable subset of $X$. If $P\in F_{1,3}$, i.e., if $u_1(P)\neq u_3(P)$, then it follows that the propositions $u_1(P)=u_2(P)$ and $u_2(P)=u_3(P)$ cannot both be false because $P$ is a $2$-element set, and they cannot both be true because $u_1(P)\neq u_3(P)$ by assumption. This proves that $F_{1,3}$ is contained in $(F_{1,2}\cup F_{2,3})\setminus G$. For the converse we have two cases:
  \begin{enumerate}
  \item If $P\in F_{1,2}\setminus F_{2,3}$, then we have $u_1(P)\neq u_2(P)$ and $u_2(P)=u_3(P)$. It follows that $u_1(P)\neq u_3(P)$.
  \item If $P\in F_{2,3}\setminus F_{1,2}$, then we have $u_1(P)=u_2(P)$ and $u_2(P)\neq u_3(P)$. It follows that $u_1(P)\neq u_3(P)$.
  \end{enumerate}
  This proves the claim.

  Since $F_{1,3}$ is the symmetric difference of $F_{1,2}$ and $F_{2,3}$, it follows that
  \begin{equation*}
    \Card{F_{1,3}} = \Card{F_{1,2}} + \Card{F_{2,3}} - 2 \times \Card{G}.
  \end{equation*}
  We conclude that $m(u_1,u_3)$ has the same parity as $m(u_1,u_2) + m(u_2,u_3)$.
\end{proof}

\begin{corollary}
  The relation $R_X$ is an equivalence relation on $D_X$.
\end{corollary}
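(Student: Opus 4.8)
The plan is to verify the three defining clauses of an equivalence relation --- reflexivity, symmetry, and transitivity --- directly, with essentially all of the real work already carried out in \cref{symmetricdifference}. Decidability is not a concern: the collection of $2$-element decidable subsets of $X$ on which $u$ and $v$ differ is a decidable subtype of the finite type $\binomial{X}{\Fin{2}}$, hence itself finite, so $m(u,v)$ is a genuine natural number, and the predicate ``$k\equiv 0\mod 2$'' is decidable on $\N$.

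For reflexivity I would observe that $u(P)=u(P)$ holds for every $2$-element decidable subset $P$, so the set counted by $m(u,u)$ is empty; thus $m(u,u)=0$ and $R_X(u,u)$ holds. For symmetry, the condition ``$u(P)\neq v(P)$'' that cuts out the set counted by $m(u,v)$ is symmetric in $u$ and $v$, so $m(u,v)=m(v,u)$, and therefore $R_X(u,v)$ holds exactly when $R_X(v,u)$ does.

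Transitivity is the clause that actually invokes \cref{symmetricdifference}: given $u_1,u_2,u_3:D_X$ with $R_X(u_1,u_2)$ and $R_X(u_2,u_3)$, we have $m(u_1,u_2)\equiv 0\mod 2$ and $m(u_2,u_3)\equiv 0\mod 2$, so by that lemma $m(u_1,u_3)\equiv m(u_1,u_2)+m(u_2,u_3)\equiv 0\mod 2$, which is precisely $R_X(u_1,u_3)$.

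I do not anticipate any genuine obstacle: the combinatorial content --- the symmetric-difference description of $F_{1,3}$ and the ensuing count $\Card{F_{1,3}}=\Card{F_{1,2}}+\Card{F_{2,3}}-2\Card{G}$ --- has already been discharged in the proof of \cref{symmetricdifference}. The only point to keep in view is that the whole argument lives over decidable subtypes of a finite type, which is exactly what licenses treating $m(u,v)$ as a natural number and reasoning about it modulo $2$; this was secured earlier by the finiteness of $\binomial{X}{\Fin{2}}$.
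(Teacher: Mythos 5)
Your proof is correct and follows the same route as the paper: reflexivity and symmetry are immediate from the definition of $m(u,v)$, and transitivity is exactly the application of \cref{symmetricdifference} that the paper uses. You spell out the immediate cases and the finiteness considerations a bit more explicitly, but there is no substantive difference.
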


\begin{proof}
  It is immediate that $R_X$ is reflexive and symmetric. To see that $R_X$ is transitive, let $u_1$, $u_2$ and $u_3$ be elements of $D_X$ such that $R_X(u_1,u_2)$ and $R_X(u_2,u_3)$. By \cref{symmetricdifference} it follows that $m(u_1,u_3) \equiv 0 \mod 2$, so $R_X(u_1,u_3)$ and $R_X$ is transitive.
\end{proof}

\begin{corollary}\label{corollary:pointed-equivalence-quotient}
  There is a pointed equivalence
  \begin{equation*}
    \varphi:D_{\Fin{n}}/R_{\Fin{n}}\simeq_\ast \Fin{2}.
  \end{equation*}
\end{corollary}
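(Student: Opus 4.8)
The plan is to mirror the proof of \cref{equivalencefin2simpson}: it suffices to show that $R_{\Fin{n}}$ has exactly two equivalence classes, for then the set $D_{\Fin{n}}/R_{\Fin{n}}$ is a $2$-element type, and, being pointed by the class of the canonical orientation $d$, it is pointed-equivalent to $\Fin{2}$ since the type of pointed $2$-element types is contractible. (That $D_{\Fin{n}}/R_{\Fin{n}}$ is a set is clear: $D_{\Fin{n}}=\prod_{(P:\binomial{\Fin{n}}{\Fin{2}})}\sum_{(x:\Fin{n})}P(x)$ is a product of subtypes of the set $\Fin{n}$, hence a set, and set-quotients of sets are sets.) Throughout, write $[u]$ for the $R_{\Fin{n}}$-class of an element $u:D_{\Fin{n}}$.

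Fix a transposition $\transposition{i}{j}$ on $\Fin{n}$ --- possible since $n\geq 2$ --- and put $u_0\defeq d$ and $w_0\defeq D_{\transposition{i}{j}}(d)$. By \cref{lemma:unrelated-transposition-max} we have $\neg R_{\Fin{n}}(u_0,w_0)$, so $[u_0]\neq[w_0]$: there are at least two classes.

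For the upper bound, let $v:D_{\Fin{n}}$ be arbitrary. Since $m(u_0,w_0)$ is odd, \cref{symmetricdifference} gives
\begin{equation*}
  m(u_0,v)+m(v,w_0)\equiv m(u_0,w_0)\equiv 1\mod 2,
\end{equation*}
so exactly one of $m(u_0,v)$ and $m(v,w_0)$ is even; that is, $v$ is $R_{\Fin{n}}$-related to exactly one of $u_0$ and $w_0$. Hence every element of $D_{\Fin{n}}$ lies in $[u_0]$ or in $[w_0]$, so there are at most two classes, and therefore exactly two.

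I do not expect a genuine obstacle here, since \cref{lemma:unrelated-transposition-max} and \cref{symmetricdifference} carry out the substantive work, just as \cref{atleasttwoecsimpson} and the multiplicativity of $\sign$ do in the Simpson construction. The only points that want a little care are that ``at least two'' together with ``at most two'' pins down the cardinality exactly (so that $D_{\Fin{n}}/R_{\Fin{n}}$ really is a $2$-element type, not a $1$-element type), and that the equivalence with $\Fin{2}$ may be taken pointed --- which follows either from the uniqueness of pointed $2$-element types, or directly, since the map $\Fin{2}\to D_{\Fin{n}}/R_{\Fin{n}}$ sending the base point to $[u_0]=[d]$ and the other point to $[w_0]$ is injective by \cref{lemma:unrelated-transposition-max} and surjective by the upper bound, hence an equivalence of sets.
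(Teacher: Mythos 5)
Your proof is correct and follows essentially the same route as the paper's: at least two classes via \cref{lemma:unrelated-transposition-max}, at most two via the parity identity of \cref{symmetricdifference}, and then the pointed equivalence from the contractibility of pointed $2$-element types. The only cosmetic difference is that you instantiate the two unrelated orientations as $d$ and $D_{\transposition{i}{j}}(d)$, whereas the paper quantifies over an arbitrary unrelated pair.
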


\begin{proof}
  It suffices to show that there are exactly two equivalence classes for $R_{\Fin{n}}$. By \cref{lemma:unrelated-transposition-max} it follows that there are at least two equivalence classes. To see that there are at most two distinct equivalence classes of $R_{\Fin{n}}$, let $u$ and $u'$ be orientations of the complete graph on $X$ such that $\neg R_{\Fin{n}}(u,u')$, and let $v$ be a third orientation. Then we have
  \begin{equation*}
    m(u,v)+m(v,u')\equiv 1\mod 2.
  \end{equation*}
  This implies that either $m(u,v)\equiv 0$ and $m(v,u')\equiv 1\mod 2$ or that $m(u,v)\equiv 1$ and $m(v,u')\equiv 0\mod 2$. It follows that either $v$ is equivalent to $u$ or that $v$ is equivalent to $u'$.
\end{proof}

\begin{theorem}\label{theorem:cartier-delooping}
  The \textbf{Cartier map} $C:\BS_n\to_\ast \BS_2$ defined by
  \begin{equation*}
    C_X\defeq D_X/R_X
  \end{equation*}
  is a delooping of the sign homomorphism.
\end{theorem}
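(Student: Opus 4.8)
The plan is to verify the hypotheses of \cref{cor:quotient-construction} for the type family $D:\BS_n\to\UU$ of orientations of the complete undirected graph on an $n$-element set, equipped with the canonical orientation $d:D_{\Fin{n}}$ given by $d(P)\defeq\max(P)$, the action on equivalences $D_e(u,P)\defeq e(u(P\circ e))$ recorded above, and the family of decidable relations $R_X$ on $D_X$ defined in this section. Recall that \cref{cor:quotient-construction} requires a type family with a basepoint, a family of equivalence relations, a pointed equivalence $D_{\Fin{n}}/R_{\Fin{n}}\simeq_\ast\Fin{2}$, and the unrelatedness condition $\neg R_{\Fin{n}}(d,D_{\transposition{i}{j}}(d))$ for every transposition $\transposition{i}{j}$ on $\Fin{n}$; its conclusion is precisely that $X\mapsto D_X/R_X$ deloops the sign homomorphism, so once the inputs are in place no argument specific to Cartier's construction remains.

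All four inputs have already been assembled: the corollary following \cref{symmetricdifference} shows that each $R_X$ is an equivalence relation on $D_X$, \cref{corollary:pointed-equivalence-quotient} produces the pointed equivalence $\varphi:D_{\Fin{n}}/R_{\Fin{n}}\simeq_\ast\Fin{2}$, and \cref{lemma:unrelated-transposition-max} establishes $\neg R_{\Fin{n}}(d,D_{\transposition{i}{j}}(d))$. Feeding this data into \cref{cor:quotient-construction} yields the theorem. The fact that $C_X=D_X/R_X$ is a $2$-element type for every $X:\BS_n$, and not merely for $X=\Fin{n}$, is already part of the statement of \cref{cor:quotient-construction}: it follows from $\varphi$ together with the connectedness of $\BS_n$, since being a $2$-element type is a proposition.

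There is essentially no obstacle in the theorem itself; the substance of Cartier's construction lies in the preparatory lemmas. The delicate point is \cref{lemma:unrelated-transposition-max}, where one counts the $2$-element decidable subsets on which $d$ and $D_{\transposition{i}{j}}(d)$ disagree and shows this count is odd, by analysing the $\Z/2$-action $P\mapsto P\circ\transposition{i}{j}$ whose only relevant fixed point is $\{i,j\}$ itself. The other ingredient doing real work is the symmetric-difference identity $m(u_1,u_3)\equiv m(u_1,u_2)+m(u_2,u_3)\mod 2$ of \cref{symmetricdifference}, which at once gives transitivity of $R_X$ and forces $R_{\Fin{n}}$ to have at most two equivalence classes. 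Granting those, \cref{theorem:cartier-delooping} reduces to a one-line appeal to \cref{cor:quotient-construction}.
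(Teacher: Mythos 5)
Your proof is correct and follows the same route as the paper: both invoke \cref{cor:quotient-construction}, citing \cref{lemma:unrelated-transposition-max} for the unrelatedness condition and \cref{corollary:pointed-equivalence-quotient} for the pointed equivalence. The extra commentary on where the real work lies is accurate but does not change the argument.
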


\begin{proof}
  In \cref{lemma:unrelated-transposition-max} we showed that
  \begin{equation*}
    \neg R_{\Fin{n}}(d,D_{\transposition{i}{j}}(d))
  \end{equation*}
  holds for every transposition $\transposition{i}{j}$ on $\Fin{n}$, and in \cref{corollary:pointed-equivalence-quotient} we constructed a pointed equivalence
  \begin{equation*}
    \varphi:D_{\Fin{n}}/R_{\Fin{n}}\simeq_\ast\Fin{2}.
  \end{equation*}
  Therefore the claim follows from \cref{cor:quotient-construction}.
\end{proof}

\section{The concrete alternating groups}\label{sec:alternating-groups}

We conclude our paper by constructing the concrete alternating groups $A_n$ for each $n\geq 2$. Let
\begin{equation*}
  Q:\BS_n\to_\ast\BS_2
\end{equation*}
be a delooping of the sign homomorphism. Recall that we showed in \cref{corollary:is-contractible-type-of-deloopings-sign-homomorphism} that the type of deloopings of the sign homomorphism is contractible, so it is irrelevant which construction of the sign homomorphism we use.

\begin{definition}
  Consider a natural number $n\geq 2$. We define the \textbf{concrete alternating group} $\BA_n$ by
  \begin{equation*}
    \BA_n\defeq\sum_{X:BS_n}Q_X.
  \end{equation*}
  We will write $A_n\defeq\loopspace\BA_n$. 
\end{definition}

The following theorem justifies our definition:

\begin{theorem}
  The concrete alternating group $BA_n$ is a pointed connected $1$-type that fits in a pullback square
  \begin{equation*}
    \begin{tikzcd}
      \BA_n \arrow[r] \arrow[d,swap,"\pr_1"] & \unit \arrow[d] \\
      \BS_n \arrow[r,swap,"Q"] & \BS_2
    \end{tikzcd}
  \end{equation*}
\end{theorem}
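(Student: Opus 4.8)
The plan is to verify the three claims in turn: $\BA_n$ is a $1$-type, it is pointed and connected, and the displayed square is a pullback. For the first claim, recall that $\BA_n = \sum_{X:\BS_n} Q_X$. The base $\BS_n$ is a $1$-type (it is a connected $1$-type by the proposition in \cref{sec:finite-types}), and each fibre $Q_X$ is a $2$-element type, hence a set, hence in particular a $0$-type. Since a $\Sigma$-type whose base is a $1$-type and whose fibres are $1$-types is again a $1$-type, we conclude $\BA_n$ is a $1$-type. For pointedness, the base point of $\BS_n$ is $\Fin{n}$, and we take the base point $q$ of $Q_{\Fin{n}}$ coming from the pointing $\varphi : Q_{\Fin{n}} \simeq_\ast \Fin{2}$ (equivalently $0 : \Fin 2$ transported back); so $(\Fin n, q) : \BA_n$ is a base point.

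The heart of the proof is recognising the square as a pullback and deducing connectedness. I would first observe that the pullback of $Q:\BS_n\to\BS_2$ along the point inclusion $\unit\to\BS_2$ (picking out the base point $\Fin 2$, equivalently $0:\Fin 2$ under the standard pointing of $\BS_2$) is by definition $\sum_{X:\BS_n}(Q_X = \Fin 2)$ — or, unwinding the pointing of $\BS_2$, the fibre of $Q$ over its base point. Now the key input is \cref{theorem:recognition-delooping-sign-homomorphism}: since $Q$ is a delooping of the sign homomorphism, condition (2) tells us $Q$ is a connected map, and hence — this is exactly Theorem II.7 of \cite{BR2017realprojective}, already cited in the proof of \cref{theorem:recognition-delooping-sign-homomorphism} — the total space $\sum_{(X:\BS_n)}Q_X$ is equivalent to the fibre of $Q$ over the base point, which is the pullback $\BA_n \times_{\BS_2} \unit$. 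So the square commutes and is a pullback, with $\pr_1$ as the left vertical map. Connectedness of $\BA_n$ then follows: the fibre of a connected map over a point of a connected type is connected, and $\BA_n$ is that fibre; alternatively, condition (1) of \cref{theorem:recognition-delooping-sign-homomorphism} directly asserts that $\sum_{(X:\BS_n)}Q_X$ is connected, so we may simply invoke it.

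Concretely I would structure the write-up as: (i) $\BA_n$ is a $1$-type because it is a $\Sigma$ over the $1$-type $\BS_n$ of sets $Q_X$; (ii) it is pointed by $(\Fin n,q)$; (iii) the displayed square commutes because $\unit\to\BS_2$ is the base-point inclusion and $Q$ is pointed, and it is a pullback because, $Q$ being a delooping of the sign homomorphism, it is a connected map and the fibre-identification of Theorem II.7 of \cite{BR2017realprojective} identifies the total space $\sum_{X:\BS_n}Q_X$ with the fibre $\fib_Q(\Fin 2)$, i.e.\ with the pullback; (iv) $\BA_n$ is connected by condition (1) of \cref{theorem:recognition-delooping-sign-homomorphism}.

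I do not expect any serious obstacle: every ingredient is already available in the excerpt, and the only mild care needed is bookkeeping the pointing of $\BS_2$ so that ``pullback along $\unit\to\BS_2$'' and ``fibre of $Q$ over the base point'' are literally the same thing — this is the routine identification $\sum_{(X:\BS_n)}(Q_X = \Fin 2) \simeq \fib_Q(\Fin 2)$, together with keeping track that the pointing equivalence $\varphi$ is what makes the top map $\BA_n\to\unit$ land compatibly. If anything is fiddly it is phrasing the pullback claim cleanly; the substance — connectedness of $Q$ and the total-space/fibre equivalence — is handed to us by \cref{theorem:recognition-delooping-sign-homomorphism} and \cite{BR2017realprojective}.
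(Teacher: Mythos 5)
Your proposal is correct and follows essentially the same route as the paper's proof: pointedness from the pointing of $Q$, the $1$-type property from $\BA_n$ being a dependent sum of $1$-types, connectedness from condition (1) of \cref{theorem:recognition-delooping-sign-homomorphism}, and the pullback square from Theorem II.7 of \cite{BR2017realprojective}. Your write-up just spells out the fibre/total-space identification in more detail than the paper does.
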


\begin{proof}
  The fact that $\BA_n$ is pointed follows directly from the fact that $Q$ is pointed. Since $\BA_n$ is a dependent sum of $1$-types, it follows that $\BA_n$ is a $1$-type. The fact that $\BA_n$ is connected follows from \cref{theorem:recognition-delooping-sign-homomorphism}. The last claim is a special case of Theorem II.7 of \cite{BR2017realprojective}.
\end{proof}

\bibliographystyle{IEEEtran}
\bibliography{bibliography}

\begin{thebibliography}{10}
\providecommand{\url}[1]{#1}
\csname url@samestyle\endcsname
\providecommand{\newblock}{\relax}
\providecommand{\bibinfo}[2]{#2}
\providecommand{\BIBentrySTDinterwordspacing}{\spaceskip=0pt\relax}
\providecommand{\BIBentryALTinterwordstretchfactor}{4}
\providecommand{\BIBentryALTinterwordspacing}{\spaceskip=\fontdimen2\font plus
\BIBentryALTinterwordstretchfactor\fontdimen3\font minus
  \fontdimen4\font\relax}
\providecommand{\BIBforeignlanguage}[2]{{%
\expandafter\ifx\csname l@#1\endcsname\relax
\typeout{** WARNING: IEEEtran.bst: No hyphenation pattern has been}%
\typeout{** loaded for the language `#1'. Using the pattern for}%
\typeout{** the default language instead.}%
\else
\language=\csname l@#1\endcsname
\fi
#2}}
\providecommand{\BIBdecl}{\relax}
\BIBdecl

\bibitem{Bourbaki98}
N.~{B}ourbaki, \emph{Algebra {I}. {C}hapters 1--3}, ser. Elements of
  Mathematics (Berlin).\hskip 1em plus 0.5em minus 0.4em\relax Springer-Verlag,
  Berlin, 1998, translated from the French, Reprint of the 1989 English
  translation [ MR0979982 (90d:00002)].

\bibitem{hottbook}
{The Univalent Foundations Program}, \emph{Homotopy Type Theory: Univalent
  Foundations of Mathematics}.\hskip 1em plus 0.5em minus 0.4em\relax Institute
  for Advanced Study: \url{https://homotopytypetheory.org/book}, 2013.

\bibitem{BvDR18}
U.~{B}uchholtz, F.~van Doorn, and E.~Rijke, ``Higher groups in homotopy type
  theory,'' in \emph{L{ICS} '18---33rd {A}nnual {ACM}/{IEEE} {S}ymposium on
  {L}ogic in {C}omputer {S}cience}.\hskip 1em plus 0.5em minus 0.4em\relax ACM,
  New York, 2018.

\bibitem{AKS2015}
B.~Ahrens, K.~Kapulkin, and M.~Shulman, ``Univalent categories and the {R}ezk
  completion,'' in \emph{Extended abstracts {F}all 2013---geometrical analysis,
  type theory, homotopy theory and univalent foundations}, ser. Trends Math.
  Res. Perspect. CRM Barc.\hskip 1em plus 0.5em minus 0.4em\relax
  Birkh\"{a}user/Springer, Cham, 2015, vol.~3, pp. 75--76.

\bibitem{Symmetry}
M.~{B}ezem, U.~Buchholtz, P.~Cagne, B.~I. Dundas, and D.~R. Grayson,
  ``Symmetry,'' \url{https://github.com/UniMath/SymmetryBook}.

\bibitem{Yorgey14}
B.~{Y}orgey, ``Combinatorial species and labelled structures,'' 2014, {T}hesis.

\bibitem{FGGvdW18}
D.~{F}rumin, H.~Geuvers, L.~Gondelman, and N.~v.~d. Weide, ``Finite sets in
  homotopy type theory,'' in \emph{Proceedings of the 7th ACM SIGPLAN
  International Conference on Certified Programs and Proofs}, ser. CPP
  2018.\hskip 1em plus 0.5em minus 0.4em\relax Association for Computing
  Machinery, 2018, p. 201–214.

\bibitem{hott-intro}
E.~{Rijke}, \emph{Introduction to Homotopy Type Theory}.\hskip 1em plus 0.5em
  minus 0.4em\relax Cambridge University Press, 2023, to appear.

\bibitem{Agda-Unimath}
\BIBentryALTinterwordspacing
E.~Rijke, E.~Bonnevier, J.~Prieto-Cubides \emph{et~al.}, ``Univalent
  mathematics in {Agda},'' 2021. [Online]. Available:
  \url{https://github.com/UniMath/agda-unimath/}
\BIBentrySTDinterwordspacing

\bibitem{rotman2012introduction}
J.~J. Rotman, \emph{An introduction to the theory of groups}.\hskip 1em plus
  0.5em minus 0.4em\relax Springer Science \& Business Media, 2012, vol. 148.

\bibitem{mathoverflow}
\BIBentryALTinterwordspacing
T.~Campion, ``Conceptual reason why the sign of a permutation is
  well-defined?'' MathOverflow. [Online]. Available:
  \url{https://mathoverflow.net/q/417690}
\BIBentrySTDinterwordspacing

\bibitem{hedberg1998coherence}
M.~{H}edberg, ``A coherence theorem for {M}artin-{L\"o}f's type theory,''
  \emph{Journal of Functional Programming}, vol.~8, no.~4, pp. 413--436, 1998.

\bibitem{BR2017realprojective}
U.~Buchholtz and E.~Rijke, ``The real projective spaces in homotopy type
  theory,'' in \emph{2017 32nd {A}nnual {ACM}/{IEEE} {S}ymposium on {L}ogic in
  {C}omputer {S}cience ({LICS})}.\hskip 1em plus 0.5em minus 0.4em\relax IEEE,
  [Piscataway], NJ, 2017, p.~8.

\bibitem{cartier1970remarques}
P.~Cartier, ``Remarques sur la signature d’une permutation,'' \emph{Enseign.
  Math}, vol.~16, pp. 7--19, 1970.

\end{thebibliography}

\end{document}